\definecolor{mygray}{gray}{0.8}
\newtheorem{theorem}{Theorem}[section]
\newtheorem{lemma}[theorem]{Lemma}
\newtheorem{corollary}[theorem]{Corollary}
\newtheorem{proposition}[theorem]{Proposition}
\theoremstyle{definition}
\newtheorem{definition}[theorem]{Definition}
\begin{document}


\title[Weakly porous sets and $A_1$ weights in spaces of homogeneous type]{Weakly porous sets and $A_1$ Muckenhoupt weights in spaces of homogeneous type}

\author[Hugo Aimar]{Hugo Aimar}
\author[Ivana G\'{o}mez]{Ivana G\'{o}mez}
\author[Ignacio G\'{o}mez Vargas]{Ignacio G\'{o}mez Vargas}
\subjclass[2020]{28A80, 28A75, 42B37}
\keywords{weak porosity; Muckenhoupt weights; space of homogeneous type}
\begin{abstract}
   In this work we characterize the sets $E\subset X$ for which there is some $\alpha>0$ such that the function $d(\cdot,E)^{-\alpha}$ belongs to the Muckenhoupt class $A_1(X,d,\mu)$, where $(X,d,\mu)$ is a space of homogeneous type, extending a recent result obtained by Carlos Mudarra in metric spaces endowed with doubling measures. In particular, generalizations of the notions of weakly porous sets and doubling of the maximal hole function are given and it is shown that these concepts have a natural connection with the $A_1$ condition of some negative power of its distance function. The proof presented here is based on Whitney-type covering lemmas built on balls of a particular quasi-distance equivalent to the initial quasi-distance $d$ and provided by Roberto Macías and Carlos Segovia in ``A well-behaved quasi-distance for spaces of homogeneous type'', Trabajos de Matemática 32, Instituto Argentino de Matemática, 1981, 1-18.
\end{abstract}

\maketitle

\section{Introduction}\label{sec:Introduction}

Sometimes, the extension of a particular result of real or harmonic analysis from Euclidean space to spaces of homogeneous type looks like a test for the robustness of the analytic techniques involved. Sometimes these extensions are useful to deal with models provided by problems in PDE. See for example \cite{SEVILLA} for a non-metric quasi-metric space of homogeneous type associated with the analysis of Harnack's inequality for degenerate parabolic equations. Nevertheless, for some analytical problems, the above are not the only reasons to focus on spaces of homogeneous type. Perhaps the most important reason is the fact that for a given quasi-distance $d$ on $X$, any symmetric function $\rho$ on $X\times X$ such that $c_1\rho\leq d\leq c_2\rho$, for some constants $c_1$ and $c_2$, is also a quasi-distance on $X$. This closure property under equivalence of quasi-distances, which is certainly not shared by the family of metrics on $X$, is particularly useful when the analytical problem under consideration is invariant by the change of equivalent quasi-distances. And even more convenient when some quasi-distance, equivalent to the original in the space, can be obtained with some specific required geometric property.

In the context of general spaces of homogeneous type, the classes of Muckenhoupt weights $A_p$ are widely studied for their various applications in singular integral theory, harmonic analysis, PDE, and other related topics. Classical examples of non-trivial $A_p$ weights in $\mathbb{R}^n$ consist of powers of distance functions of the sort $|x|^\alpha$, for $-n<\alpha<n(p-1)$ and $1<p<\infty$. Motivated by the study of regularity of PDE solutions with adequate boundary conditions, in \cite{AIMAR,LOPEZ}, sufficient requirements on a real number $\alpha$ and a closed subset $E$ of an Ahlfors metric measure space $(X,d,\mu)$ were found in order to satisfy the condition $d(x,E)^\alpha\in A_1(X,d,\mu)$.

More recently, in the articles \cite{ANDERSON,MUDARRA} by T. Anderson, J. Lehrbäck, C. Mudarra, and A. Vähäkangas, a complete characterization of subsets $E\subset X$ for which $d(x,E)^{-\alpha}$ belongs to the $A_1$ class for some $\alpha>0$ has been found, first in $\mathbb{R}^n$, and then for measure metric spaces $(X,d,\mu)$ satisfying the doubling condition. One of the necessary conditions on the set $E$ for this to be the case, besides some doubling condition on the maximal hole function $\rho_{d,E}$ to be introduced in the next pages, was therein termed as \textit{weak porosity}. Roughly speaking, a weakly porous set $E\subset X$ is a set such that all $d$-balls $B$ in $X$ contain a finite quantity of sub-balls $B_1,\hdots,B_N$ which do not intersect $E$ and whose measures sum at least a fixed proportion of the number $\mu(B)$. Intuitively, one can think of $E$ as a set that is ``full of pores everywhere'' with respect to both the quasi-distance $d$ and the measure $\mu$ on the underlying space, in contrast with other more common definitions of porosity which relate only to the metric $d$ and have no relation at all with any existing measure \cite{FALCONER,SHMERKIN}. For some previous results related to the problem considered here, see \cite{VASIN,DYDA}.

The proof of such equivalence in the Euclidean case provided in \cite{ANDERSON} strongly relies on the classical dyadic partitions of cubes in $\mathbb{R}^n$, and was generalized to the case of a measure metric space with a doubling measure in \cite{MUDARRA} by the use of the ``dyadic cubes'' due to Christ \cite{CHRIST}, as well as some refinements made later on (see \cite{HYTONEN}). In the present work, we intend to extend the same result obtained in \cite{MUDARRA} to spaces of homogeneous type $(X,d,\mu)$.
The rich theory of spaces of homogeneous type provides us with a broad set of tools to better face the problem at hand. In particular, we shall make use of the main result in \cite{MACIAS1} and a Whitney-type covering lemma in \cite{MACIAS2}.

The main result to be proved in this paper is the equivalence of weak porosity plus the doubling condition of the so-called ``maximal hole function'' for a set $E$, introduced by C. Mudarra in metric spaces,   to the $A_1$ condition for some negative power of $d(\cdot,E)$ in general spaces of homogeneous type. Its statement is presented next, while the precise definitions involved are given in the upcoming sections.

\begin{theorem}
    \label{teo1}
    Let $(X,d,\mu)$ be a space of homogeneous type such that every $d$-ball is an open set. Let $E$ be a non-empty subset of $X$. Then, the following statements are equivalent,
    \begin{itemize}
        \item[(I)] $E$ is weakly porous and $\rho_{d,E}$ is doubling;
        \item[(II)] There is some $\alpha>0$ such that $d(\cdot,E)^{-\alpha}\in A_1(X,d,\mu)$.
    \end{itemize}
\end{theorem}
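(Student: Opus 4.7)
The plan is to reduce the problem to a setting in which the quasi-distance behaves essentially like a metric, so that the arguments employed by Mudarra in \cite{MUDARRA} can be transplanted. First, by the Macías--Segovia construction in \cite{MACIAS1}, I would replace the original $d$ by an equivalent quasi-distance $\tilde d$ some positive power of which is biLipschitz to a genuine metric; in particular, the Whitney-type covering lemma of \cite{MACIAS2} is then available in the $\tilde d$-setting. The first technical step is to check that both conditions in the statement are invariant under the passage to an equivalent quasi-distance: the $A_1$ class is manifestly stable under such equivalences, and both the weak porosity of $E$ and the doubling of the maximal hole function transfer from $d$ to $\tilde d$ by comparing $d$-balls with $\tilde d$-balls, only the numerical constants being affected. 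It therefore suffices to prove the equivalence for $\tilde d$ in place of $d$.

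For the direction (I) $\Rightarrow$ (II), once in the $\tilde d$-setting, I would aim to estimate the integral average of $\tilde d(\cdot,E)^{-\alpha}$ over an arbitrary $\tilde d$-ball $B$ for a sufficiently small $\alpha>0$. The heart of the argument is an iterative application of weak porosity: from $B$ I extract a finite family of pairwise disjoint $E$-free sub-balls whose measures add up to at least a fixed fraction $\kappa\mu(B)$; the doubling of $\rho_{\tilde d,E}$ guarantees that on each such sub-ball the distance to $E$ is comparable to the sub-ball's radius, providing a clean pointwise upper bound for the weight. The same extraction is then applied recursively on the remainder, generating a geometric chain of measure ratio $(1-\kappa)^{n}$. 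Summing the resulting series, which converges precisely when $\alpha$ is chosen below a threshold depending on $\kappa$, on the doubling constant of $\rho_{d,E}$ and on the quasi-metric constants, produces the $A_1$-type bound $\fint_B \tilde d(\cdot,E)^{-\alpha}\,d\mu \lesssim \operatorname*{ess\,inf}_B \tilde d(\cdot,E)^{-\alpha}$.

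For the converse (II) $\Rightarrow$ (I), I would exploit the $A_1$ inequality together with a Whitney decomposition in the $\tilde d$-metric. Fixing a ball $B$, the blow-up of $d(\cdot,E)^{-\alpha}$ on $E$ combined with the $A_1$ control of its average forces a quantifiable portion of $B$ to lie at a definite $d$-distance from $E$; a Whitney covering of this portion by $\tilde d$-balls that are disjoint from $E$ delivers exactly the sub-balls demanded by weak porosity with a uniform lower bound on the total measure. Applying the same estimate on $B$ and on a dilate $2B$ and comparing the extremal holes extracted in each case yields the doubling of $\rho_{d,E}$.

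The main obstacle I expect is the delicate bookkeeping of constants in the iterative argument for (I) $\Rightarrow$ (II): the exponent $\alpha$ must be chosen so as to simultaneously balance the weak-porosity proportion $\kappa$, the doubling constant of $\rho_{d,E}$ and the Macías--Segovia quasi-metric constants, in such a way that the iteration produces a uniformly convergent geometric series across all scales and all balls. Making sure that the Whitney families and the porosity sub-balls translate cleanly between $d$ and $\tilde d$, and that no constant is silently absorbed at the wrong step of the reduction, is where most of the technical care will have to be concentrated.
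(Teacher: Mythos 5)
Your overall architecture for (I)~$\Rightarrow$~(II) is the paper's: pass to the Mac\'ias--Segovia quasi-distance $\delta$ (used there precisely so that $\delta$-balls are uniformly spaces of homogeneous type, making the Whitney lemma applicable inside a ball), prove a geometric decay for the measure of small neighbourhoods of $E$, and sum the resulting series for small $\alpha$. But the mechanism you give for the decay --- ``extract disjoint $E$-free sub-balls of $B$ and apply the same extraction recursively on the remainder'' --- does not work as stated: the remainder of a ball after deleting finitely many sub-balls is not a ball, so weak porosity cannot be reapplied to it. The correct object to iterate on is the truncated neighbourhood $F_\delta(\varepsilon)=\{x\in B:\delta(x,E)<\varepsilon\}$. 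One covers $F_\delta(\varepsilon)$ by Whitney balls, applies weak porosity only inside the Whitney balls of radius $\gtrsim\varepsilon$, and uses the doubling of $\rho_{\delta,E}$ to transfer the hole of size $\varepsilon$ (witnessed by the Whitney ball's nearby point outside $F_\delta(\varepsilon)$) down to the scale of that Whitney ball, so that the free sub-balls produced by weak porosity have radii bounded below by a fixed multiple of $\varepsilon$ and hence lie outside $F_\delta(p\varepsilon)$; the small Whitney balls are handled separately, being automatically disjoint from $F_\delta(p\varepsilon)$. Your description of the role of doubling (``the distance to $E$ on each sub-ball is comparable to its radius'') misses this point, and it is exactly where the proof lives.

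For (II)~$\Rightarrow$~(I) you propose a Whitney covering of the portion of $B$ far from $E$; this has a genuine gap, because Whitney balls of that open set have radii comparable to the distance to its complement and therefore degenerate near the boundary, so they do not come with the uniform lower bound $r_i\ge\gamma\rho_{d,E}(B)$ that weak porosity demands; an additional selection argument would be needed to discard the small ones while retaining a fixed fraction of the measure. The paper avoids this entirely with a more elementary device: a maximal $2\gamma\rho$-separated family of points in $D=\{x\in B:\ d(x,E)\ge\gamma\rho \text{ and } d(x,X\setminus B)\ge\gamma\rho\}$, all of whose balls have radius exactly $\gamma\rho$ by construction, followed by a contradiction as $\gamma\to0^+$. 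You also leave implicit the one quantitative input that makes either route run, namely that the existence of a hole of size $\sim\rho_{d,E}(B)$ inside $B$ forces $\operatorname{ess\,inf}_{B} d(\cdot,E)^{-\alpha}\le C\rho_{d,E}(B)^{-\alpha}$; without this, the $A_1$ inequality gives no usable upper bound on the average and hence no control on the measure of $\{x\in B: d(x,E)<\gamma\rho\}$. Your one-line argument for the doubling of $\rho_{d,E}$ (compare $B$ with $2B$ through the $A_1$ inequality) is essentially the paper's.
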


Let us recall that in \cite{MACIAS3}, an equivalent quasi-distance in any quasi-metric space is built in such a way that the balls are open sets.

This paper is organized as follows. Section~\ref{sec:basicAnalyticGeometricAspects} is devoted to introduce the basic definitions and results that we shall use later in the paper, including the quasi-distance built by Macías and Segovia, the Whitney type covering lemma, and the definition of $A_1$ Muckenhoupt weights. In Section~\ref{sec:weaklyPorousSets} we introduce the generalization to spaces of homogeneous type of the notions of weak porosity and the doubling of the maximal hole function and we prove their invariance under changes of equivalent quasi-distances. The proof that the $A_1$-Muckenhoupt condition of $d(\cdot,E)^{-\alpha}$ for some positive $\alpha$ implies the doubling property of the maximal hole function and the weak porosity of $E$ is given in Section~\ref{sec:proofA1impliesweakporosityAndDoubling}. Finally, Section~\ref{sec:proofWeakPorosityAndDoublingimpliesA1} is devoted to the proof of the sufficiency of the weak porosity and the doubling of the maximal hole function on a set $E$ to get that $d(\cdot,E)^{-\alpha}\in A_1(X,d,\mu)$ for some $\alpha>0$.

\section{Basic analytic and geometric aspects of spaces of homogeneous type}\label{sec:basicAnalyticGeometricAspects}

Given a non-empty set $X$, we say that a function $d:X\times X\to[0,\infty)$ is a quasi-distance in $X$ if there is some constant $K$ such that for every $x,y,z\in X$ we have
\begin{itemize}
    \item[\textit{i.}] $d(x,y)=0$ 
    if and only if $x=y$;
    \item[\textit{ii.}] $d(x,y)=d(y,x)$;
    \item[\textit{iii.}] $d(x,z)\leq K[d(x,y)+d(y,z)]$.
\end{itemize}

Assuming such a constant $K$ exists for a set $X$ and a function $d$ as above, $K$ is necessarily greater than or equal to one, as long as $X$ is not a one-point set, and is called a triangular constant for $d$. For any quasi-distance $d$, we denote
$$K_d:=\min\{K\geq1:d\text{ satisfies property \textit{iii} with constant $K$}\},$$
so $K_d$ is then the least possible triangular constant for $d$. If $K_d=1$, we say as usual that $d$ is a metric. In any case, we refer to the pair $(X,d)$ as a quasi-metric space. Notice that if $Y\subset X$ with $Y\neq \emptyset$, then the restriction $d^*:={d|}_{Y\times Y}$ is a quasi-distance on $Y$ with $K_{d^*}\leq K_d$. Any quasi-distance $d$ on a set $X$ induces a topology $\tau_d$ via the uniform structure $\mathcal{U}_d=\{U\in\mathcal{P}(X\times X):\text{ there exists } r>0\text{ with }B_d(r)\subset U\}$, where $B_d(r)=\{(x,y)\in X\times X:d(x,y)<r\}$. The resulting topology is given by
$$\tau_d=\{A\subset X:\text{for every }x\in A,\text{ there exists } r>0\text{ such that }B_d(x,r)\subset A\},$$
where $B_d(x,r)=\{y\in X:d(x,y)<r\}$ denotes the $d$-ball with center $x$ and radius $r$. As it is well known, the $d$-balls do not need to be open sets for general quasi-metric spaces.

It is worthy to mention at this point that the uniformity approach to the topology defined in $X$ by a quasi-distance $d$ is used in \cite{MACIAS3} to obtain a distance $\rho$ such that $d$ and $\Tilde{d}:=\rho^s$ are equivalent, for some $s\geq 1$. The main tool is a lemma of metrization of uniform spaces with countable bases due to Aline Huke-Frink \cite{FRINK}. See also \cite{GUSTAVSSON}. In particular, the $\Tilde{d}$-balls are open sets.


In the current context of $(X,d)$ a quasi-metric space for which the $d$-balls are open, a space of homogeneous type is a triple $(X,d,\mu)$, where $\mu$ is a Borel measure such that for some constant $A>0$ the inequalities
$$0<\mu(B_d(x,2r))\leq A\:\mu(B_d(x,r))<\infty,$$
hold for every $x\in X$ and $r>0$.

Consider now the space $L^1_\text{loc}(X,d,\mu)$ defined as the set of all measurable functions $\phi:X\to\mathbb{R}$ such that $\int_{B_d(x,r)}|\phi|d\mu<\infty$ for every $x\in X$ and every $r>0$. We say that $w$ is a weight in $X$ if  $w\in L^1_\text{loc}(X,d,\mu)$ and $w$ is nonnegative. The specific weights we are interested in are introduced in the following definition that generalizes the well-known Muckenhoupt classes in Euclidean settings.

\begin{definition}
    \label{def1}
    Given a weight $w$ in $X$, we will say that $w$ belongs to the Muckenhoupt class $A_1(X,d,\mu)$ if there is a constant $C>0$ such that
        \begin{equation}
            \frac{1}{\mu(B_d(x,r))}\int_{B_d(x,r)}w(y)d\mu(y) \leq C\:\text{ess inf}_{y\in B_d(x,r)}w(y),\:\forall x\in X,\:r>0.
        \end{equation}
\end{definition}

The best possible constant for which the above inequality is valid is denoted as $[w]_{A_1(X,d,\mu)}$, or simply $[w]_{A_1}$.

\begin{proposition}
    \label{prop1}
    Let $E\subset X$ be a non-empty set, then
    \begin{itemize}
        \item[\textit{(i)}] for every $\theta\in\mathbb{R}$, the function $d(\cdot,E)^\theta:X\to[0,\infty)$ is measurable. Here, $d(x,E)=\inf_{e\in E}d(x,e)$;
        \item[\textit{(ii)}] if $\Bar{E}$ denotes the closure of $E$, then $\Bar{E}=\{x\in X:d(x,E)=0\}$.
    \end{itemize}
\end{proposition}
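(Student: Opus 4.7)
The plan is to trace both parts back to the standing hypothesis that every $d$-ball is open; both claims then follow by arguments familiar from the metric setting.

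For \textit{(i)}, I would first establish that $d(\cdot, E)$ is upper semi-continuous. The key identity is
\[
\{x \in X : d(x, E) < t\} \;=\; \bigcup_{e \in E} B_d(e, t), \qquad t>0,
\]
which is immediate from the definition $d(x, E) = \inf_{e \in E} d(x, e)$ together with the symmetry of $d$. Each ball on the right is open by hypothesis, so the set on the left is open for every $t$, whence $d(\cdot, E)$ is upper semi-continuous and therefore Borel measurable. The map $t \mapsto t^{\theta}$ is continuous on $(0, \infty)$ and Borel on $[0, \infty]$ once extended at $t = 0$ according to the sign of $\theta$, so composing yields the measurability of $d(\cdot, E)^{\theta}$ for every $\theta \in \mathbb{R}$.

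For \textit{(ii)}, the two inclusions both rely on the fact that, by the description of $\tau_d$ recalled in the excerpt, the collection $\{B_d(x, r)\}_{r > 0}$ is a fundamental system of open neighborhoods at $x$. For $\supset$, given $d(x, E) = 0$ and an open $U \ni x$, I would pick $r > 0$ with $B_d(x, r) \subset U$ and use the vanishing of the infimum to find $e \in E$ with $d(x, e) < r$, so that $e \in U \cap E$ and hence $x \in \overline{E}$. For $\subset$, from $x \in \overline{E}$ and the openness of $B_d(x, r)$ (together with $x \in B_d(x,r)$, which uses $d(x,x) = 0$), $B_d(x, r)$ must meet $E$ for every $r > 0$, forcing $d(x, E) < r$ for all $r$ and therefore $d(x, E) = 0$.

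The proposition is essentially a formality and I do not anticipate any genuine obstacle. The only point requiring attention is that the hypothesis \emph{every $d$-ball is an open set} is genuinely needed: in an arbitrary quasi-metric space $d$ need not be continuous in either variable, the sublevel sets $\{d(\cdot, E) < t\}$ may fail to be open, and $B_d(x, r)$ need not itself be a neighborhood of $x$, so both parts could break. This is precisely why the paper imposes the standing topological assumption, which, as noted after Theorem~\ref{teo1}, can always be arranged by replacing $d$ with an equivalent quasi-distance via the Mac\'{\i}as--Segovia construction.
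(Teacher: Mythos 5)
Your proof is correct and follows essentially the same route as the paper: for \textit{(i)} the paper likewise observes that the relevant level sets of $d(\cdot,E)^\theta$ are unions of $d$-balls and hence open, and for \textit{(ii)} it uses the identical neighborhood-basis argument with $d$-balls (stating the first inclusion in contrapositive form). Your version merely spells out the upper semi-continuity of $d(\cdot,E)$ and the composition step a bit more explicitly.
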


\begin{proof}
    To prove \textit{(i)}, notice that when $\theta=0$, $d(\cdot,E)^\theta$ is constant. On the other hand, for $\theta\neq0$, the upper or the lower level sets of $d(\cdot,E)^\theta$ can be written as a union of balls. Since we are assuming that the $d$-balls are open sets, those level sets are open and hence measurable. To prove \textit{(ii)}, take $x\in X\setminus\bar{E}$, then there is some $r>0$ such that $B_d(x,r)\subset X\setminus\bar{E}$, which means $d(x,E)\geq r>0$. Consequently, $d(x,E)=0$ implies that $x\in\bar{E}$. Besides, if $x\in\bar{E}$, $B_d(x,r)\cap E\neq\emptyset$ for every $r>0$. It follows that, for each $r>0$, there is some $e_r\in E$ such that $d(x,e_r)<r$. Then, $d(x,E)=0$.
\end{proof}

Item \textit{(i)} of Proposition~\ref{prop1} gives us a little peace of mind in the sense that the functions $d(\cdot,E)^{-\alpha}$, which we want to characterize as elements of the class $A_1(X,d,\mu)$, are always measurable, whereas \textit{(ii)} will be used repeatedly in the course of this work.

A very common technique while working in spaces of homogeneous type is the construction of special quasi-distances, which are comparable to the original, to attack problems that would otherwise be difficult to solve. Motivated by this, consider the collection $\partial(X)=\{\Tilde{d}:X\times X\to[0,\infty)\text{ such that }\Tilde{d}\text{ is a quasi-distance in }X\}$. Note $\partial(X)\neq\emptyset$ whenever $X\neq\emptyset$ as the function $\Tilde{d}(x,y)=1$ if $x\neq y$ and $\Tilde{d}(x,x)=0$ is always a metric in $X$. We introduce an equivalence relation $\sim$ on $\partial(X)$ requiring that $d'\sim d''$ if and only if there are constants $0<c_1\leq c_2<\infty$ such that the inequalities
\begin{equation*}
    c_1 d'(x,y)\leq d''(x,y)\leq c_2d'(x,y),
\end{equation*}
hold for every $x$ and $y$ in $X$. It is not difficult to see that equivalent quasi-distances induce the same topologies over the set $X$. As previously discussed, we are interested in quasi-distances $\Tilde{d}$ equivalent to $d$, but we will also require that the balls associated with $\Tilde{d}$ are still measurable sets. Motivated by this, we consider the set
$$\partial(X,d)=\{\Tilde{d}\in\partial(X):\Tilde{d}\sim d\text{ and }B_{\Tilde{d}}(x,r)\text{ is open for every }x\in X,\:r>0\}.$$

Notice that any $\Tilde{d}\in\partial(X,d)$ admits a doubling constant $A_{\Tilde{d}}$ for the measure $\mu$ depending on $c_1$, $c_2$ and $A$. From now on, we write $A_d=A$ to differentiate between doubling constants defined for other quasi-distances.

The next result provides estimates for the triangular constants in terms of the equivalence constants.

\begin{proposition}
    If $\Tilde{d}\sim d$ is a quasi-distance in $X$ and $0<c_1\leq c_2<\infty$ are constants such that
    \begin{equation}
        \label{eq5}
        c_1d(x,y)\leq\Tilde{d}(x,y)\leq c_2d(x,y),\:\forall x,y\in X,
    \end{equation}
    then
    \begin{equation}
        \label{eq6}
        \frac{c_1}{c_2}\leq \frac{K_{\Tilde{d}}}{K_d} \leq \frac{c_2}{c_1},
    \end{equation}
\end{proposition}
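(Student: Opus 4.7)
The plan is to establish each of the two inequalities in \eqref{eq6} by using the equivalence \eqref{eq5} to transport the minimal triangle-inequality constant of one quasi-distance into an admissible triangle-inequality constant for the other, and then invoke the minimality definition of $K_d$ and $K_{\tilde{d}}$.

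First I would prove the upper bound $K_{\tilde{d}} \leq (c_2/c_1) K_d$. Given arbitrary $x,y,z\in X$, I would chain the three facts: the upper comparison $\tilde{d}(x,z)\leq c_2 d(x,z)$, the triangle inequality for $d$ with constant $K_d$, and the lower comparison $c_1 d\leq \tilde{d}$ applied to the two pieces $d(x,y)$ and $d(y,z)$. This yields
\[
\tilde{d}(x,z)\leq c_2 K_d\bigl[d(x,y)+d(y,z)\bigr]\leq \frac{c_2 K_d}{c_1}\bigl[\tilde{d}(x,y)+\tilde{d}(y,z)\bigr],
\]
so $(c_2/c_1)K_d$ is a triangular constant for $\tilde{d}$. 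By the minimality of $K_{\tilde{d}}$, we get $K_{\tilde{d}}\leq (c_2/c_1)K_d$, which is the right half of \eqref{eq6}.

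For the other inequality, I would exploit the symmetric form of the hypothesis: inverting \eqref{eq5} gives $(1/c_2)\tilde{d}(x,y)\leq d(x,y)\leq (1/c_1)\tilde{d}(x,y)$, so the same argument with $d$ and $\tilde{d}$ interchanged (and with constants $1/c_2$ and $1/c_1$ in place of $c_1$ and $c_2$) shows that $((1/c_1)/(1/c_2))K_{\tilde{d}} = (c_2/c_1) K_{\tilde{d}}$ is a triangular constant for $d$. Minimality of $K_d$ then gives $K_d\leq (c_2/c_1)K_{\tilde{d}}$, which is equivalent to the left half $c_1/c_2\leq K_{\tilde{d}}/K_d$.

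There is no real obstacle here; the proof is just a bookkeeping exercise. The one mild point to keep in mind is that the ratios $K_{\tilde{d}}/K_d$ and $K_d/K_{\tilde{d}}$ are well defined because both quasi-distances are genuine quasi-distances on the same nonempty set, so both minimal constants belong to $[1,\infty)$ (and are strictly positive); the comparison constants $c_1, c_2$ are finite and positive by hypothesis, so dividing by them is legitimate.
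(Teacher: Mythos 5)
Your proof is correct and follows essentially the same route as the paper: chain the upper comparison, the triangle inequality for $d$, and the lower comparison to show $(c_2/c_1)K_d$ is an admissible triangular constant for $\tilde{d}$, invoke minimality of $K_{\tilde{d}}$, and obtain the reverse inequality by interchanging the roles of $d$ and $\tilde{d}$. Your added remark about well-definedness of the ratios is a harmless bonus.
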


\begin{proof}
    Fixed some points $x,y,z\in X$, we have
    \begin{equation*}
        \Tilde{d}(x,z) \leq c_2 d(x,z) \leq c_2K_d[ d(x,y) + d(y,z) ] \leq \frac{c_2}{c_1} K_d [\Tilde{d}(x,y) + \Tilde{d}(y,z) ].
    \end{equation*}
    \noindent This says $K_{\Tilde{d}}\leq c_2c_1^{-1}K_d$. The remaining inequality follows by interchanging $d$ and $\Tilde{d}$ in the previous estimates.
\end{proof}

\begin{proposition}
    \label{propo2.4}
    Given $\alpha>0$ and $\Tilde{d}\in\partial(X,d)$, we have that $d(\cdot,E)^{-\alpha}\in A_1(X,d,\mu)$ if and only if $\Tilde{d}(\cdot,E)^{-\alpha}\in A_1(X,\Tilde{d},\mu),$
    for any non-empty set $E\subset X$.
\end{proposition}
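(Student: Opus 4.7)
The plan is to exploit the equivalence $c_1 d\leq \tilde{d}\leq c_2 d$ to transfer the $A_1$ condition between the two quasi-distances, losing only constants that depend on $c_1$, $c_2$, $\alpha$ and the doubling constant $A_d$. Since the relation $\sim$ is symmetric, it suffices to prove one implication; the converse follows by interchanging the roles of $d$ and $\tilde{d}$ and replacing $(c_1,c_2)$ by $(1/c_2,1/c_1)$.

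First I would record three elementary consequences of \eqref{eq5}. (a) Taking infimums over $e\in E$ yields $c_1 d(x,E)\leq \tilde{d}(x,E)\leq c_2 d(x,E)$, so the two weights $\tilde{d}(\cdot,E)^{-\alpha}$ and $d(\cdot,E)^{-\alpha}$ agree up to multiplicative constants $c_1^{-\alpha}$ and $c_2^{-\alpha}$; in particular, one is locally integrable iff the other is, and the measurability of $\tilde{d}(\cdot,E)^{-\alpha}$ follows from Proposition~\ref{prop1} since $\tilde{d}\in\partial(X,d)$ has open balls. (b) The inclusions
$$B_d(x,r/c_2)\subset B_{\tilde{d}}(x,r)\subset B_d(x,r/c_1)$$
hold for every $x\in X$ and $r>0$. (c) Iterating the doubling condition of $\mu$ with respect to $d$ at most $\lceil \log_2(c_2/c_1)\rceil$ times produces a constant $M=M(c_1,c_2,A_d)$ such that $\mu(B_d(x,r/c_1))\leq M\,\mu(B_d(x,r/c_2))$, hence the three measures appearing in (b) are pairwise comparable up to $M$.

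Assuming $d(\cdot,E)^{-\alpha}\in A_1(X,d,\mu)$ with constant $[w]_{A_1}$, I would fix $x\in X$ and $r>0$ and chain
\begin{align*}
\frac{1}{\mu(B_{\tilde{d}}(x,r))}\int_{B_{\tilde{d}}(x,r)}\tilde{d}(y,E)^{-\alpha}\,d\mu(y)
&\leq \frac{c_1^{-\alpha}}{\mu(B_{\tilde{d}}(x,r))}\int_{B_d(x,r/c_1)}d(y,E)^{-\alpha}\,d\mu(y) \\
&\leq c_1^{-\alpha}M\cdot\frac{1}{\mu(B_d(x,r/c_1))}\int_{B_d(x,r/c_1)}d(y,E)^{-\alpha}\,d\mu(y) \\
&\leq c_1^{-\alpha}M\,[w]_{A_1}\cdot\text{ess inf}_{y\in B_d(x,r/c_1)}d(y,E)^{-\alpha} \\
&\leq c_1^{-\alpha}c_2^{\alpha}M\,[w]_{A_1}\cdot\text{ess inf}_{y\in B_{\tilde{d}}(x,r)}\tilde{d}(y,E)^{-\alpha}.
\end{align*}
The first line replaces $\tilde{d}^{-\alpha}$ by $c_1^{-\alpha}d^{-\alpha}$ and enlarges the domain using (b); the second invokes (c); the third is the $A_1(d)$ hypothesis applied to the $d$-ball $B_d(x,r/c_1)$; the last combines $B_{\tilde{d}}(x,r)\subset B_d(x,r/c_1)$ (so the essinf over the smaller set dominates the essinf over the larger set) with the pointwise bound $d^{-\alpha}\leq c_2^{\alpha}\tilde{d}^{-\alpha}$.

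I do not anticipate a genuine obstacle: the content of the proposition is simply the invariance of the $A_1$ condition under an equivalence of quasi-distances for which balls remain measurable. The only point that requires care is the direction of the essinf inequality in the last step, where one must use precisely the inclusion $B_{\tilde{d}}(x,r)\subset B_d(x,r/c_1)$ (not the reverse), so that the essential infimum over the $d$-ball — which is what the hypothesis delivers — is dominated by the essential infimum over the $\tilde{d}$-ball, and only then is converted from $d^{-\alpha}$ to $\tilde{d}^{-\alpha}$.
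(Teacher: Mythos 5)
Your proposal is correct and follows essentially the same chain of estimates as the paper's own proof: reduce to one implication by symmetry, replace $\tilde d(\cdot,E)^{-\alpha}$ by $c_1^{-\alpha}d(\cdot,E)^{-\alpha}$, enlarge $B_{\tilde d}(x,r)$ to $B_d(x,r/c_1)$, compare measures via finitely many doublings, apply the $A_1(d)$ hypothesis, and shrink the essential infimum from $B_d(x,r/c_1)$ back to $B_{\tilde d}(x,r)$ before converting to $\tilde d(\cdot,E)^{-\alpha}$. The constants you obtain, $c_1^{-\alpha}c_2^{\alpha}M[w]_{A_1}$ with $M=A_d^{\lceil\log_2(c_2/c_1)\rceil}$, coincide with the paper's $A_d^m(c_2/c_1)^\alpha[d(\cdot,E)^{-\alpha}]_{A_1}$.
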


\begin{proof}
    Notice that it is enough to prove one of the implications. Let $0<c_1\leq c_2<\infty$ be such that $c_1d(x,y)\leq\Tilde{d}(x,y)\leq c_2d(x,y)$ and suppose $d(\cdot,E)^{-\alpha}\in A_1(X,d,\mu)$. Given $y\in X$ and $r>0$, we have
    \begin{align*}
        \frac{1}{\mu(B_{\Tilde{d}}(y,r))}\int_{B_{\Tilde{d}}(y,r)}\Tilde{d}(x,E)^{-\alpha} & d\mu(x)\\ &\leq\frac{c_1^{-\alpha}}{\mu(B_{\Tilde{d}}(y,r))}\frac{\mu(B_d(y,c_1^{-1}r))}{\mu(B_d(y,c_1^{-1}r))}\int_{B_d(y,c_1^{-1}r)}d(x,E)^{-\alpha}d\mu(x) \\
        &\leq \frac{A_d^m}{c_1^\alpha}[d(\cdot,E)^{-\alpha}]_{A_1(X,d,\mu)}\text{inf ess}_{B_d(y,c_1^{-1}r)}d(\cdot,E)^{-\alpha} \\
        &\leq \frac{A_d^m}{c_1^\alpha}[d(\cdot,E)^{-\alpha}]_{A_1(X,d,\mu)}\text{inf ess}_{B_{\Tilde{d}}(y,r)}d(\cdot,E)^{-\alpha} \\
        &\leq A_d^m\Big(\frac{c_2}{c_1}\Big)^\alpha[d(\cdot,E)^{-\alpha}]_{A_1(X,d,\mu)}\text{inf ess}_{B_{\Tilde{d}}(y,r)}\Tilde{d}(\cdot,E)^{-\alpha},
    \end{align*}
    where in the second inequality we used the estimate
    $$\frac{\mu(B_d(y,c_1^{-1}r))}{\mu(B_{\Tilde{d}}(y,r))}\leq\frac{\mu(B_d(y,c_1^{-1}r))}{\mu(B_d(y,c_2^{-1}r))}\leq A_d^m$$
    with $2^{m-1}<\frac{c_2}{c_1}\leq2^m$.
\end{proof}

The particular quasi-distance to be used in Section~\ref{sec:proofWeakPorosityAndDoublingimpliesA1} to prove one of the implications of Theorem~\ref{teo1} is due to Macías and Segovia \cite{MACIAS1,AIMAR2}. As the next theorem states, such a quasi-distance can always be constructed for a given quasi-metric space and possesses some very good properties that will be of great help to prove our main result. Before continuing, let us recall the set operation of composition between relations on a set $X$. Given $W_1,W_2\subset X\times X$, their composition is the new set of $X\times X$ given by
$$W_1\circ W_2:=\{(x,z)\in X\times X:\exists y\in X\text{ such that }(x,y)\in W_2\text{ and }(y,z)\in W_1\}.$$

\begin{definition}
    \label{def4}
    Pick $0<a<(2K_d)^{-1}$ and set $B_d(r)=\{(x,y)\in X\times X:d(x,y)<r\}$, for each $r>0$,  $U(r,0)=B_d(r)$ and for each $n\in\mathbb{N}$ take $U(r,n)=B_d(a^nr)\circ U(r,n-1)\circ B_d(a^n r)$. Finally, define $V(r)=\bigcup_{n=0}^\infty U(r,n)$. The \textbf{Mac\'ias-Segovia quasi-distance} on $X$ induced by $d$ is the function $\delta:X\times X\to[0,\infty)$ given by
    $$\delta(x,y):=\inf\{r>0:(x,y)\in V(r)\}.$$
\end{definition}
The fact that $\delta$ is indeed a quasi-distance is contained in the next result.
\begin{theorem}
    \label{teo2}
    The function $\delta(\cdot,\cdot)$ satisfies the following properties.
    \begin{itemize}
        \item[\textit{(i)}] $\delta$ is a quasi-distance with $K_\delta\leq3K_d^3$. Furthermore, for every $x,y\in X$, we have
        $$\delta(x,y)\leq d(x,y)\leq 3K_d^2\delta(x,y).$$
        \item[\textit{(ii)}] For every $x\in X$ and $r>0$,
        $$B_\delta(x,r)=\{y\in X:(x,y)\in V(r)\}.$$
        Consequently, $\delta$-balls are open sets and $\delta\in\partial(X,d)$.
        \item[\textit{(iii)}] There exists a constant $\beta=\beta(K_d)\in(0,1)$ such that for every $x\in X$, $r>0$, $0<t\leq2K_\delta r$ and $y\in B_\delta(x,r)$ there is a point $z\in B_\delta(x,r)$ with
        \begin{equation}
            \label{eq:aux}
            B_\delta(z,\beta t)\subset B_\delta(y,t)\cap B_\delta(x,r).
        \end{equation}
        \item[\textit{(iv)}] The class $\{(B_\delta(x,r),\delta,\mu):x\in X,r>0\}$ is a uniform family of spaces of homogeneous type. More precisely, for every $x\in X$ and $r>0$, if $\delta^*$ and $\mu^*$ are the restrictions of $\delta$ and $\mu$ to $B_\delta=B_\delta(x,r)$, then the triplet $(B_\delta,\delta^*,\mu^*)$ is a space of homogeneous type admitting $K_\delta$ as a possible triangular constant as well as some doubling constant $A_{\delta^*}=A_{\delta^*}(A_d,K_d)$ both independent of $x$ and $r$.
    \end{itemize}
\end{theorem}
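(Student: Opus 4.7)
My plan is to follow the framework of Mac\'ias and Segovia. I first establish the quantitative equivalence $\delta\le d\le 3K_d^2\delta$ by analyzing chains arising from the recursion $U(r,n)=B_d(a^n r)\circ U(r,n-1)\circ B_d(a^n r)$; this yields (i) almost immediately. The identification of $\delta$-balls in (ii) then follows from routine monotonicity and strictness considerations. The genuine obstacle is (iii), the ``thick interior'' property, which relies on the specific nested structure of $V(r)$. Once (iii) is in hand, (iv) is a short deduction combining (iii) with the doubling of $\mu$.

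For (i), each $U(r,n)$ is symmetric by induction on $n$ (since $B_d(a^n r)$ is symmetric and composition of symmetric relations is symmetric), so $V(r)$ and $\delta$ are symmetric. The bound $\delta\le d$ is immediate from $(x,y)\in B_d(d(x,y)+\varepsilon)=U(d(x,y)+\varepsilon,0)\subset V(d(x,y)+\varepsilon)$. For the deeper bound $d\le 3K_d^2\delta$ I unroll $(x,y)\in U(r,n)$ into a chain $x=y_0,y_1,\dots,y_{2n+1}=y$ with step sizes $d(y_{i-1},y_i)<a^{|n+1-i|}r$---the middle step ($i=n+1$) is bounded by $r$, and the sizes decay geometrically toward both endpoints. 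Splitting the chain at $y_n$ (so as to keep the $r$-step on one side) and iterating the quasi-triangle inequality for $d$ in the direction that pairs large coefficients with small step sizes, every bound for a step takes the form $K_d\cdot(K_d a)^j\cdot r$; the hypothesis $a<(2K_d)^{-1}$ gives $K_d a<\tfrac{1}{2}$, so the sums collapse against geometric series bounded by $2$, producing constants independent of $n$ which, upon optimization, amount to $d(x,y)\le 3K_d^2 r$. Non-degeneracy then follows from $\delta\asymp d$, and the quasi-triangle inequality for $\delta$ is obtained by composition:
\begin{equation*}
    \delta(x,z)\le d(x,z)\le K_d[d(x,y)+d(y,z)]\le 3K_d^3[\delta(x,y)+\delta(y,z)],
\end{equation*}
giving $K_\delta\le 3K_d^3$.

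For (ii), the inclusion $\{y:(x,y)\in V(r)\}\subset B_\delta(x,r)$ is the content to check: if $(x,y)\in U(r,n)$, the finitely many strict $d$-ball inequalities composing the chain permit a small uniform contraction of the radius, i.e., there is $r'<r$ with $(x,y)\in U(r',n)$, hence $\delta(x,y)\le r'<r$. The reverse inclusion uses monotonicity $U(r_1,n)\subset U(r_2,n)$ when $r_1\le r_2$. For openness, given $y\in B_\delta(x,r)$ with $(x,y)\in U(r,n)$, embed into $U(r,n+1)$ via the trivial choice $u=x$, $v=y$; the constraint on the last leg becomes $d(y,y')<a^{n+1}r$, so $B_d(y,a^{n+1}r)\subset B_\delta(x,r)$, and the hypothesis that $d$-balls are open supplies the required neighborhood.

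The main obstacle is (iii). The approach is to exploit the ``cushion'' provided by the outermost $d$-balls in a chain for $(x,y)\in V(r)$: one selects a point $z$ near $y$ on (or obtained by a small perturbation of) such a chain, at a scale calibrated to $t$, so that a $\delta$-ball of radius $\beta t$ around $z$ simultaneously (a)~lies inside $B_\delta(y,t)$ because $\delta(z,y)$ is dominated by the tail of the chain, and (b)~lies inside $B_\delta(x,r)$ because the chain from $x$ to $z$ can be enlarged by an additional $\beta t$-step without escaping $V(r)$. Balancing the two inclusions forces $\beta$ to depend only on $K_d$ via $K_\delta$, and the prescribed range $0<t\le 2K_\delta r$ is precisely what guarantees enough joint slack. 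Once (iii) is established, (iv) follows by standard reasoning: the restriction $\delta^*$ inherits $K_{\delta^*}\le K_\delta$, and for $z\in B_\delta$ and $\rho>0$ an application of (iii) gives a point $w$ with $B_\delta(w,c\rho)\subset B_{\delta^*}(z,\rho)$ for some $c=c(K_d)\in(0,1)$, whence iterated ambient doubling yields
\begin{equation*}
    \mu^*(B_{\delta^*}(z,2\rho))\le\mu(B_\delta(z,2\rho))\le A_d^N\mu(B_\delta(w,c\rho))\le A_d^N\mu^*(B_{\delta^*}(z,\rho)),
\end{equation*}
with $N=N(K_d,A_d)$ counting the doubling steps needed to pass from $B_\delta(w,c\rho)$ to $B_\delta(z,2\rho)$; this provides a uniform doubling constant independent of $x$ and $r$.
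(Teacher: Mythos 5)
Your overall plan follows the same Macías--Segovia framework the paper uses, and for items (i) and (ii) your reconstruction of the chain estimates and the openness argument is sound and matches the spirit of what the paper does (the paper mostly cites Lemma~2.6, Corollaries~2.5 and 2.11 of \cite{MACIAS1} and only proves openness directly). For (iv) you derive the uniform subspace doubling from (iii), which is a legitimate route (and presumably how Corollary 2.11 in \cite{MACIAS1} is obtained); the paper simply quotes the corollary. One small bookkeeping slip in (iv): the iterated doubling is applied to $\delta$-balls, so the exponent base should be the $\delta$-doubling constant $A_\delta$ (or a constant derived from $A_d$ and $K_d$ through the equivalence $\delta\asymp d$), not $A_d$ directly; also when $\rho$ exceeds the scale $2K_\delta r$ the restricted ball $B_{\delta^*}(z,\rho)$ saturates to all of $B_\delta$, so a trivial case split is needed before invoking (iii).

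The genuine gap is in (iii). Your paragraph is a heuristic outline, not an argument: you describe perturbing the endpoint of a chain to absorb a $\beta t$-step, but you never actually produce $z$ nor estimate $\delta(z,y)$ and $\delta(x,z)$. More importantly, you assert that ``the prescribed range $0<t\le 2K_\delta r$ is precisely what guarantees enough joint slack,'' but the quotable statement extracted from the proof of Theorem~2.7 in \cite{MACIAS1} is valid only for $0<t\le 2K_dr$, with constant $\beta_0=(3K_d^2)^{-1}a^{3-p}$ where $a^p<2K_d\le a^{p-1}$. Since $K_\delta$ can be as large as $3K_d^3$, the range $t\le 2K_\delta r$ is strictly wider. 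The paper closes this gap explicitly: given $t\le 2K_\delta r$, set $t'=t/(3K_d^2)\le 2K_dr$ (using $K_\delta\le 3K_d^3$), apply the Macías--Segovia inclusion at scale $t'$, and absorb the rescaling into the constant, yielding $\beta=(3K_d^2)^{-1}\beta_0=(3K_d^4)^{-1}a^{3-p}$. Without this rescaling step your (iii) is not established for the full advertised range of $t$, and that range is used later in the Whitney-based argument, so the omission is not merely cosmetic.
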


\begin{proof}
    Items (\textit{i}) and (\textit{iv}) as well as the fact that $B_\delta(x,r)=\{y\in X:(x,y)\in V(r)\}$ are merely restatements of Lemma~2.6 and Corollaries~2.5 and 2.11 of \cite{MACIAS1}. We now prove the remaining assertion in (\textit{ii}). Note that $\{y\in X:(x,y)\in V(r)\}=\bigcup_{n=0}^\infty\{y\in X:(x,y)\in U(r,n)\}$. Taking $W_n:=B_d(a^nr)\circ U(r,n-1)$ and letting $\Omega(x):=\{y\in X:(x,y)\in \Omega\}$ denote the $x$-slice of a set $\Omega\in\mathcal{P}(X\times X)$, we find
    \begin{align*}
        \{y\in X:(x,y)\in U(r,n)\} &= \{y\in X:(x,y)\in W_n\circ B_d(a^nr)\} \\
        &=\{y\in X:\text{there is some }w\in W_n(x)\text{ s.t. }(w,y)\in B_d(a^nr)\} \\
        &= \bigcup_{w\in W_n(x)}B_d(w,a^nr),
    \end{align*}
    which is open in $(X,d)$ and so is $[V(r)](x)=B_\delta(x,r)$, for any $r>0$. Because $\delta\sim d$ and $\delta$-balls are open sets, it follows that $\delta\in\partial(X,d)$.

    Finally, a statement like the one asserted in item (\textit{iii}) can not be found in \cite{MACIAS1}, but it can be deduced from the proof of Theorem 2.7 therein. Indeed, there it is shown that for every $x\in X$, $r>0$, $y\in B_\delta(x,r)$ and $0<t\leq2K_dr$ the inclusion
    $$B_\delta\Big(z,\frac{a^{3-p}}{3K_d^2}t\Big)\subset B_\delta(y,t)\cap B_\delta(x,r)$$
    holds, being $p$ the integer satisfying $a^p<2K_d\leq a^{p-1}$. This says that \eqref{eq:aux} is valid for $t\in(0,2K_dr]$ using $\beta_0:=(3K_d^2)^{-1}a^{3-p}$ instead of $\beta$. Since $2K_\delta r(3K_d^2)^{-1}\leq2K_dr$ by (\textit{i}), we can choose $\beta=(3K_d^2)^{-1}\beta_0=(3K_d^4)^{-1}a^{3-p}$ for \eqref{eq:aux} to be valid for every $t\in(0,2K_\delta r]$.
\end{proof}

Let us recall that equivalent quasi-distances induce the same topologies on the set $X$. So that the expression $A$ is an open set in $X$ is not ambiguous and we use it regardless of the quasi-distance.

To end this section, we state a basic Whitney-type covering lemma that follows directly from \cite[Lemma~2.9]{MACIAS2} and that will be of great help in Section~\ref{sec:proofWeakPorosityAndDoublingimpliesA1}.

\begin{lemma}
    \label{lemma0}
    Let $\Omega$ be an open, proper, and bounded subset of $Y$, for some space of homogeneous type $(Y,d',\nu)$, and suppose $K>0$ is any quasi-triangular constant for $d'$. Then, there exists a family of $d'$-balls $\{B_{d'}(x_i,r_i)\}_{i\in I}$, indexed by a countable set $I$, such that
    \begin{itemize}
        \item[\textit{(a)}] $B_{d'}(x_i,r_i)\cap B_{d'}(x_j,r_j)=\emptyset$, whenever $i\neq j$;
        \item[\textit{(b)}] $\bigcup_{i\in I}B_{d'}(x_i,4Kr_i)=\Omega$;
        \item[\textit{(c)}] $4K r_i\leq d'(x,X\setminus\Omega)\leq12K^3 r_i$, $\forall x\in B_{d'}(x_i,4K r_i)$, $i\in I$;
        \item[\textit{(d)}] For each 
        $i\in I$, there exists some $y_i\in Y\setminus\Omega$ such that $d'(x_i,y_i)<12K^2 r_i$.
    \end{itemize}
\end{lemma}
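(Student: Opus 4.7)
My plan is to apply the Vitali-type selection result in \cite[Lemma~2.9]{MACIAS2} to the natural cover of $\Omega$ by Whitney-type balls whose radii are a fixed small multiple (depending only on $K$) of each center's distance to $Y\setminus\Omega$. Concretely, I would set $r(x) = c\,d'(x,Y\setminus\Omega)$ for every $x\in\Omega$, where $c=c(K)>0$ is a constant to be pinned down at the end so that the factors $4K$, $12K^2$, $12K^3$ in items \textit{(b)}--\textit{(d)} come out exactly as stated, and then consider the family $\mathcal{F}=\{B_{d'}(x,r(x)): x\in\Omega\}$. Since $\Omega$ is open and proper, $d'(\cdot,Y\setminus\Omega)$ is finite and strictly positive on $\Omega$, so $\mathcal{F}$ is a genuine cover of $\Omega$.

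Applying \cite[Lemma~2.9]{MACIAS2} to $\mathcal{F}$ then extracts a pairwise disjoint subfamily $\{B_{d'}(x_i,r_i)\}_{i\in I}\subset\mathcal{F}$ whose dilation by the factor $4K$ still covers $\bigcup\mathcal{F}=\Omega$; this produces items \textit{(a)} and \textit{(b)} at once. The index set $I$ is forced to be countable by the disjointness, the positivity of $\nu$ on $d'$-balls of positive radius, and the fact that $\Omega$, being bounded, has finite $\nu$-measure.

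Items \textit{(c)} and \textit{(d)} then follow routinely from the definition of $r_i=c\,d'(x_i,Y\setminus\Omega)$ together with the quasi-triangular inequality. For \textit{(d)}, the characterization of $d'(x_i,Y\setminus\Omega)=c^{-1}r_i$ as an infimum supplies a point $y_i\in Y\setminus\Omega$ with $d'(x_i,y_i)$ arbitrarily close to $c^{-1}r_i$, which with the right choice of $c$ yields the bound $d'(x_i,y_i)<12K^2 r_i$. For the upper bound in \textit{(c)}, I would apply the $K$-triangular inequality along $x\to x_i\to y_i$ using the $y_i$ from \textit{(d)}; for the lower bound, along $x_i\to x\to y$ for an arbitrary $y\in Y\setminus\Omega$, using that $d'(x_i,y)\geq c^{-1}r_i$.

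The main obstacle I anticipate is purely bookkeeping: choosing the single constant $c=c(K)$ so that the three numerical constants in \textit{(b)}, \textit{(c)}, and \textit{(d)} line up simultaneously with the dilation factor produced by \cite[Lemma~2.9]{MACIAS2}. Once $c$ is fixed, each of \textit{(c)} and \textit{(d)} is a one-line application of the quasi-triangular inequality, which is why the authors can reasonably say that the lemma follows directly from the cited reference.
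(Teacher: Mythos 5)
Your outline is sound and is, in substance, the standard construction behind the result: Whitney radii $r(x)=c\,d'(x,Y\setminus\Omega)$, a Vitali-type selection of a disjoint subfamily, and the quasi-triangle inequality for the comparability of radii with the distance to the complement. Note, however, that the paper offers no proof at all here: it simply observes that the statement is (a restatement of) Lemma~2.9 of Mac\'ias--Segovia, and that lemma is precisely the Whitney covering lemma you are reconstructing, not a Vitali selection result. So your plan amounts to reproving the cited lemma rather than applying it; the ``Vitali-type selection'' you invoke is a different (and more elementary) ingredient that you would need to quote or prove separately, together with the remark that for a Whitney family one only needs to capture the \emph{centers} of the unselected balls --- the generic Vitali dilation factor is of order $K^2$, and it is only because each point of $\Omega$ is itself a center, meeting a selected ball of at least half its radius, that the factor $4K$ in \textit{(b)} is attainable.

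One piece of the deferred bookkeeping deserves a warning, because as you describe it the constant narrowly fails. Take $c=1/(8K^2)$, so $d'(x_i,Y\setminus\Omega)=8K^2r_i$; then \textit{(d)} and the lower bound in \textit{(c)} come out as you say. But for the upper bound in \textit{(c)} you propose to run the triangle inequality through ``the $y_i$ from \textit{(d)}'', i.e.\ using only $d'(x_i,y_i)<12K^2r_i$; that yields $d'(x,Y\setminus\Omega)\le K(4Kr_i+12K^2r_i)=(4K^2+12K^3)r_i$, which exceeds $12K^3r_i$. You must instead use the infimum itself: for every $\varepsilon>0$ choose $y\in Y\setminus\Omega$ with $d'(x_i,y)<8K^2r_i+\varepsilon$, giving $d'(x,Y\setminus\Omega)\le K(4Kr_i+8K^2r_i)=(4K^2+8K^3)r_i\le 12K^3r_i$ since $K\ge1$. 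With that correction the constants $4K$, $12K^2$, $12K^3$ all line up and the argument closes.
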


\section{Weakly porous sets and maximal holes}\label{sec:weaklyPorousSets}

The next two definitions introduce the concepts of maximal holes and weakly porous subsets in the setting of spaces of homogeneous type. Along this section, we shall assume as before that $(X,d,\mu)$ is a space of homogeneous type such that $d$-balls are open sets.

\begin{definition}
    \label{def2}
    Given a non-empty set $E\subset X$ and some ball $B_d(x,r)$, we consider the set $\Lambda(x,r;d,E)=\{0<s\leq2K_dr:\text{there exists } y\in X\text{ such that }B_d(y,s)\subset B_d(x,r)\setminus E\}$. The \textbf{maximal $E$-free hole function} is then defined as
    \begin{equation}
        \label{eq3}
        \rho_{d,E}(B_d(x,r)):=\sup\Lambda(x,r;d,E).
    \end{equation}
    \noindent In case $\Lambda(x,r;d,E)=\emptyset$, we set $\rho_{d,E}(B_d(x,r))=0$.
\end{definition}

The maximal $E$-free hole function gives the supremum over all radii of existing pores in a given ball $B_d(x,r)$, relative to the set $E$ and the quasi-distance $d$, but having a restriction in the pore size. This upper bound of $2K_dr$ prevents $\rho_{d,E}$ from reaching large or even infinite values and allows one to consider the following notion of doubling of this function with respect to the ball radius.

\begin{definition}
    \label{def2bis}
    Following \cite{MUDARRA}, we say that the function $\rho_{d,E}$ is \textbf{doubling} if there exists a constant $C_{d,E}>0$ such that
    \begin{equation}
        \label{eq4}
        \rho_{d,E}(B_d(x,2r))\leq C_{d,E}\:\rho_{d,E}(B_d(x,r)),\text{ for every $x\in X$ and $r>0$}.
    \end{equation}
\end{definition}

We note that the doubling property for $\rho_{d,E}$ can be seen as a uniform $\Delta_2$ condition for the family of functions of $r$, $\rho_{d,E}(B_d(x,r))$, with $x\in X$. Some basic results concerning this function are summarized in the two lemmas.

\begin{lemma}
    Fixed some $x\in X$ and given $E\in\mathcal{P}(X)\setminus\{\emptyset\}$, the function $\rho_{d,E}(B_d(x,\cdot)):(0,\infty)\to\mathbb[0,\infty)$ is nondecreasing.
\end{lemma}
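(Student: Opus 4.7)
The plan is to show that for $0 < r_1 < r_2$ we have the set inclusion $\Lambda(x,r_1;d,E) \subseteq \Lambda(x,r_2;d,E)$, from which the monotonicity of the supremum immediately yields the desired inequality $\rho_{d,E}(B_d(x,r_1)) \leq \rho_{d,E}(B_d(x,r_2))$.

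To establish the inclusion, I would pick an arbitrary $s \in \Lambda(x,r_1;d,E)$ and verify the two defining conditions for membership in $\Lambda(x,r_2;d,E)$. The size constraint is immediate: since $s \leq 2K_d r_1$ and $r_1 < r_2$, we have $s \leq 2K_d r_2$. For the pore condition, the hypothesis provides some $y \in X$ with $B_d(y,s) \subset B_d(x,r_1) \setminus E$; using the trivial inclusion $B_d(x,r_1) \subset B_d(x,r_2)$ gives $B_d(x,r_1) \setminus E \subset B_d(x,r_2) \setminus E$, so that same $y$ witnesses $s \in \Lambda(x,r_2;d,E)$.

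Finally, I would dispose of the degenerate case in which $\Lambda(x,r_1;d,E) = \emptyset$: by Definition~\ref{def2} we then have $\rho_{d,E}(B_d(x,r_1)) = 0$, while $\rho_{d,E}(B_d(x,r_2))$ is a supremum of nonnegative numbers (or $0$ by the same convention), so the inequality is trivial. There is no real obstacle here; the statement is essentially a bookkeeping consequence of the definitions, and the only point requiring a moment of care is to remember that the pore radius bound $2K_d r$ also grows with $r$, so enlarging $r$ cannot shrink $\Lambda$.
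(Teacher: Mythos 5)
Your proof is correct and matches the paper's approach: the paper simply states that the lemma follows directly from Definition~\ref{def2}, and your write-up spells out exactly that argument, namely the inclusion $\Lambda(x,r_1;d,E)\subseteq\Lambda(x,r_2;d,E)$ for $r_1<r_2$ (both the radius bound and the pore condition are monotone in $r$) together with the monotonicity of the supremum and the convention for the empty case.
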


\begin{proof}
    The statement follows directly from the Definition~\ref{def2}.
\end{proof}

\begin{lemma}
    \label{lemma3.3}
    Let $E$ be a non-empty set in $X$ and let $B=B_d(y,r)$ be some ball. If $\rho_{d,E}$ is doubling and $B\cap E\neq\emptyset$, then there exists a constant $C_0>0$ independent of $B$ such that
    $$d(x,E)\leq C_0\rho_{d,E}(B),\text{ for every }x\in B\setminus\Bar{E}.$$
\end{lemma}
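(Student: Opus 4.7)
The plan is to exhibit an explicit pore near $x$ and then use the monotonicity (already observed in the preceding lemma) together with the doubling hypothesis to absorb the radius inflation into a constant.

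Fix $x \in B \setminus \bar{E}$ and set $\eta := d(x,E)$; by item \textit{(ii)} of Proposition~\ref{prop1}, $\eta > 0$. First I would use the hypothesis $B \cap E \neq \emptyset$ to bound $\eta$: choosing any $e \in B \cap E$, the triangle inequality gives $\eta \leq d(x,e) \leq K_d[d(x,y)+d(y,e)] < 2K_d r$. Next, I claim the ball $B_d(x,\eta/2)$ is a bona fide pore of $B_d(y, K_d(1+K_d)r)$ relative to $E$. Indeed, on one hand $B_d(x,\eta/2) \subset B_d(x,\eta)$ which is disjoint from $E$ by definition of $\eta$; on the other hand, for $z \in B_d(x,\eta/2)$,
\[
d(y,z) \leq K_d[d(y,x) + d(x,z)] < K_d[r + \eta/2] \leq K_d(1+K_d)r,
\]
using the bound $\eta/2 \leq K_d r$. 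The size restriction is also met: $\eta/2 \leq K_d r \leq 2K_d \cdot K_d(1+K_d)r$, so $\eta/2$ lies in $\Lambda(y, K_d(1+K_d)r;\, d, E)$ of Definition~\ref{def2}.

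Consequently,
\[
\frac{\eta}{2} \leq \rho_{d,E}\bigl(B_d(y, K_d(1+K_d)r)\bigr).
\]
To return to the original radius $r$, let $n$ be the smallest non-negative integer with $2^n \geq K_d(1+K_d)$; this integer depends only on $K_d$. By the monotonicity of $\rho_{d,E}(B_d(y,\cdot))$ established in the preceding lemma and $n$ iterations of the doubling inequality \eqref{eq4}, I obtain
\[
\rho_{d,E}\bigl(B_d(y, K_d(1+K_d)r)\bigr) \leq \rho_{d,E}\bigl(B_d(y, 2^n r)\bigr) \leq C_{d,E}^{\,n}\, \rho_{d,E}(B).
\]
Combining the two displayed inequalities yields $d(x,E) = \eta \leq 2C_{d,E}^{\,n}\, \rho_{d,E}(B)$, and setting $C_0 := 2C_{d,E}^{\,n}$ finishes the argument; note that $C_0$ depends only on $K_d$ and the doubling constant of $\rho_{d,E}$, so it is independent of the ball $B$.

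No real obstacle is anticipated beyond bookkeeping: the only subtlety is the cap $2K_d r$ built into Definition~\ref{def2}, which is why I exhibit the pore inside an enlarged ball $B_d(y, K_d(1+K_d)r)$ (so the desired pore radius stays well under that ball's cap) and then pay for this enlargement with finitely many applications of the doubling property.
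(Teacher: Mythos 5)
Your proof is correct and follows essentially the same route as the paper's: exhibit the ball around $x$ of radius comparable to $d(x,E)$ as an $E$-free pore of a dilate of $B$, then absorb the dilation via monotonicity and finitely many applications of the doubling condition. The only (immaterial) differences are that the paper uses the full radius $t=d(x,E)$ and the dilate $K_d(1+2K_d)r$ where you use $\eta/2$ and $K_d(1+K_d)r$, and the paper separately notes that $\rho_{d,E}(B)=0$ forces $B\setminus\bar{E}=\emptyset$, a case your chain of inequalities handles implicitly.
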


\begin{proof}
    First, notice that if we were to have $\rho_{d,E}(B)=0$, this would mean that $d(x,E)=0$ for every $x\in B$. To see this start by taking such a point $x\in B$. Since balls are open in $X$, there is some $0<s_0\leq2K_dr$ such that $B_d(x,s)\subset B$ for every $s\leq s_0$. If $\rho_{d,E}(B)=0$, then $s\notin\Lambda(y,r;d,E)$ and there should exist $e_s\in E$ such that $e_s\in B_d(x,s)$ for each $0<s\leq s_0$, which would imply $d(x,E)=0$ for every $x\in B$. So in this case $B\setminus\Bar{E}=\emptyset$ and there is nothing to prove.

    If $\rho_{d,E}(B)>0$, we have that $B\setminus\Bar{E}\neq\emptyset$. Fixed some $x$ in $B\setminus\Bar{E}$ and setting $t=d(x,E)$, we have that $0<t\leq2K_dr$, where the upper bound follows from the fact that $B\cap E\neq\emptyset$. On the other hand, given $z\in B_d(x,t)$, we have $d(y,z)\leq K_d[d(y,x)+d(x,z)]
    \leq K_d(1+2K_d)r$. This means that $B_d(x,t)\subset B_d(y,K_d[1+2K_d]r)\setminus E$ and consequently $t\in\Lambda(x_0,K_d[1+2K_d]r;d,E)$. From this, we get
        \begin{align*}
            d(x,E)=t\leq\rho_{d,E}(B_d(y,K_d[1+2K_d]r))\leq C_{d,E}^m\rho_{d,E}(B),
        \end{align*}
        for every $x\in B\setminus\Bar{E}$, with $m$ such that $2^{m-1}<K_d(2K_d+1)\leq2^m$.
\end{proof}

Lemma~\ref{lemma3.3} shows that the values of $d(\cdot,E)$ over balls $B$ which intersect with the set $E$ are controlled by the maximal $E$-free hole function. The next results, in turn, establish the relationship between maximal $E$-free hole functions relative to equivalent quasi-distances.

\begin{lemma}
    \label{lemma3.5}
    Suppose $E\subset X$ is a non-empty set and $\Tilde{d}\sim d$ is a quasi-distance in $X$ with constants $0<c_1\leq c_2<\infty$ as in \eqref{eq5}. Then, for every $x\in X$ and $r>0$ we have
    \begin{equation}
        \label{eq7}
        c_1\rho_{d,E}(B_d(x,c_2^{-1}r))\leq\rho_{\Tilde{d},E}(B_{\Tilde{d}}(x,r)) \leq c_2\rho_{d,E}(B_d(x,c_1^{-1}r)).
    \end{equation}
\end{lemma}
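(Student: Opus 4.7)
The plan is to transfer admissible hole radii between the two quasi-distances using two ingredients: the ball inclusions that follow from $c_1 d\le \tilde d\le c_2 d$, and the triangular-constant comparison $\frac{c_1}{c_2}K_d\le K_{\tilde d}\le \frac{c_2}{c_1}K_d$ from \eqref{eq6}. Concretely, for every $z\in X$ and $\rho>0$ the equivalence \eqref{eq5} yields $B_d(z,\rho/c_2)\subset B_{\tilde d}(z,\rho)\subset B_d(z,\rho/c_1)$. The two inequalities in \eqref{eq7} are then completely symmetric and I would prove them by showing an injection $s\mapsto s/c_2$ from $\Lambda(x,r;\tilde d,E)$ into $\Lambda(x,c_1^{-1}r;d,E)$ and an injection $s\mapsto c_1 s$ from $\Lambda(x,c_2^{-1}r;d,E)$ into $\Lambda(x,r;\tilde d,E)$.

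For the right-hand inequality in \eqref{eq7}, fix $s\in\Lambda(x,r;\tilde d,E)$ with witness $y\in X$, so $B_{\tilde d}(y,s)\subset B_{\tilde d}(x,r)\setminus E$ and $s\le 2K_{\tilde d}r$. The ball inclusions give
\[
B_d\bigl(y,s/c_2\bigr)\subset B_{\tilde d}(y,s)\subset B_{\tilde d}(x,r)\setminus E\subset B_d\bigl(x,r/c_1\bigr)\setminus E,
\]
and the upper bound on $s$ combined with \eqref{eq6} gives $s/c_2\le 2K_{\tilde d}r/c_2\le 2K_d(r/c_1)$. Therefore $s/c_2\in\Lambda(x,c_1^{-1}r;d,E)$, so $s\le c_2\rho_{d,E}(B_d(x,c_1^{-1}r))$; taking the supremum over $s$ yields the right-hand inequality.

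For the left-hand inequality the argument is analogous, but reversed: fix $s\in\Lambda(x,c_2^{-1}r;d,E)$ with witness $y$, so $B_d(y,s)\subset B_d(x,r/c_2)\setminus E$ and $s\le 2K_d(r/c_2)$. The same ball inclusions produce
\[
B_{\tilde d}\bigl(y,c_1 s\bigr)\subset B_d(y,s)\subset B_d\bigl(x,r/c_2\bigr)\setminus E\subset B_{\tilde d}(x,r)\setminus E,
\]
while \eqref{eq6} gives $c_1 s\le 2c_1 K_d r/c_2\le 2K_{\tilde d}r$. Hence $c_1 s\in\Lambda(x,r;\tilde d,E)$ and so $c_1 s\le \rho_{\tilde d,E}(B_{\tilde d}(x,r))$; taking the sup over $s$ finishes the proof.

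There is essentially no obstacle here; the only point to be careful about is that the size cap $s\le 2K r$ built into the definition of $\Lambda$ must be preserved under the rescaling, and this is exactly what \eqref{eq6} was designed to handle. The degenerate cases where one of the $\Lambda$-sets is empty are also harmless: the corresponding $\rho$ is $0$ by convention, and in each implication above the empty source forces nothing, while if the target $\Lambda$ is empty then the source must have been empty too (otherwise the constructed element would contradict emptiness), so both sides of the inequality are $0$.
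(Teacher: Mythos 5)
Your proof is correct and follows essentially the same route as the paper: both arguments transfer admissible radii between the sets $\Lambda(\cdot;\tilde d,E)$ and $\Lambda(\cdot;d,E)$ via the ball inclusions from \eqref{eq5} and use \eqref{eq6} to check that the size cap $s\le 2Kr$ is preserved. The only cosmetic difference is that you prove both inequalities element-wise, while the paper proves the right-hand one by manipulating the sets under the supremum and dispatches the left-hand one by symmetry.
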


\begin{proof}
    By the symmetry of the problem at hand, it suffices to prove the second inequality. Given $x\in X$ and $r>0$, we have
    \begin{align*}
        \rho_{\Tilde{d},E}&(B_{\Tilde{d}}(x,r)) \\&= \sup\Lambda(x,r;\Tilde{d},E) \\
        &= \sup\{0<s\leq 2K_{\Tilde{d}} r:\exists y\in X\text{ s.t. }B_{\Tilde{d}}(y,s)\subset B_{\Tilde{d}}(x,r)\setminus E\} \\
        &\leq \sup\{0<s\leq 2K_{\Tilde{d}} r:\exists y\in X\text{ s.t. }B_d(y,c_2^{-1}s)\subset B_d(x,c_1^{-1}r)\setminus E\} \\
        &= c_2 \sup\{0<s\leq 2c_2^{-1}K_{\Tilde{d}} r:\exists y\in X\text{ s.t. }B_d(y,s)\subset B_d(x,c_1^{-1}r)\setminus E\} \\
        &= c_2 \sup\{0<s\leq 2K_d(c_2^{-1}K_d^{-1}K_{\Tilde{d}} r):\exists y\in X\text{ s.t. }B_d(y,s)\subset B_d(x,c_1^{-1}r)\setminus E\} \\
        &\leq c_2\sup\{0<s\leq 2K_d(c_1^{-1} r):\exists y\in X\text{ s.t. }B_d(y,s)\subset B_d(x,c_1^{-1}r)\setminus E\} \\
        &= c_2\rho_{d,E}(B_d(x,c_1^{-1}r)),
    \end{align*}
    \noindent where the first inequality is justified by the inclusions $B_d(y,c_2^{-1}s)\subset B_{\Tilde{d}}(y,s)$ and $B_{\Tilde{d}}(x,r)\subset B_d(x,c_1^{-1}r)$. The third equality follows from rewriting the set
    $$\{0<s\leq 2K_{\Tilde{d}} r:\exists y\in X\text{ s.t. }B_d(y,c_2^{-1}s)\subset B_d(x,c_1^{-1}r)\setminus E\}$$
    \noindent as
    $$c_2\{0<t\leq 2K_{\Tilde{d}} c_2^{-1} r:\exists y\in X\text{ s.t. }B_d(y,t)\subset B_d(x,c_1^{-1}r)\setminus E\},$$
    where $aU$ denotes the dilation $\{au\in\mathbb{R}:u\in U\}$ of $U\subset\mathbb{R}$ by the number $a$. The last inequality follows from the fact that $2 K_d (c_2^{-1} K_{\Tilde{d}} K_d^{-1} r)\leq 2 K_d c_1^{-1} r$ (by \eqref{eq6}) implies the set
    $$\{0<s\leq 2K_d(c_2^{-1}K_d^{-1}K_{\Tilde{d}} r):\exists y\in X\text{ s.t. }B_d(y,s)\subset B_d(x,c_1^{-1}r)\setminus E\}$$
    is contained in $\Lambda(x,c_1^{-1}r;d,E)$.
\end{proof}

\begin{corollary}
    \label{coro3.5}
    Suppose $E\subset X$ is a non-empty set and let $\Tilde{d}\sim d$ be a quasi-distance in $X$. Then, $\rho_{d,E}$ is doubling if and only if $\rho_{\Tilde{d},E}$ is doubling.
\end{corollary}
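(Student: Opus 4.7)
The plan is to invoke Lemma~\ref{lemma3.5} twice (once to pass from $\tilde{d}$ to $d$ and once to come back), insert the doubling hypothesis for $\rho_{d,E}$ in between, and use the monotonicity of $\rho_{d,E}(B_d(x,\cdot))$ proved in the first lemma of this section to absorb the mismatched radii into a finite iteration of the $\Delta_2$ condition. By the symmetric role played by $d$ and $\tilde{d}$ in the equivalence $d\sim\tilde{d}$, it suffices to establish one implication, say that $\rho_{d,E}$ doubling implies $\rho_{\tilde{d},E}$ doubling.

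Fix constants $0<c_1\leq c_2<\infty$ realizing \eqref{eq5}. Given $x\in X$ and $r>0$, I would first apply the right-hand inequality in \eqref{eq7} with radius $2r$ to get
\begin{equation*}
    \rho_{\tilde{d},E}(B_{\tilde{d}}(x,2r))\leq c_2\,\rho_{d,E}(B_d(x,2c_1^{-1}r)).
\end{equation*}
Next, choose an integer $m\geq 1$ with $2^{m-1}<c_2/c_1\leq 2^{m}$, so that $2c_1^{-1}r\leq 2^{m}c_2^{-1}r$. Using the nondecreasing character of $\rho_{d,E}(B_d(x,\cdot))$ and applying the doubling hypothesis for $\rho_{d,E}$ exactly $m$ times, I obtain
\begin{equation*}
    \rho_{d,E}(B_d(x,2c_1^{-1}r))\leq \rho_{d,E}(B_d(x,2^{m}c_2^{-1}r))\leq C_{d,E}^{\,m}\,\rho_{d,E}(B_d(x,c_2^{-1}r)).
\end{equation*}
Finally, the left-hand inequality in \eqref{eq7} (applied with the original $r$) gives $\rho_{d,E}(B_d(x,c_2^{-1}r))\leq c_1^{-1}\rho_{\tilde{d},E}(B_{\tilde{d}}(x,r))$.

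Chaining the three estimates yields
\begin{equation*}
    \rho_{\tilde{d},E}(B_{\tilde{d}}(x,2r))\leq \frac{c_2}{c_1}\,C_{d,E}^{\,m}\,\rho_{\tilde{d},E}(B_{\tilde{d}}(x,r)),
\end{equation*}
so $\rho_{\tilde{d},E}$ is doubling with constant $C_{\tilde{d},E}:=(c_2/c_1)\,C_{d,E}^{\,m}$, depending only on $c_1,c_2$ and $C_{d,E}$. There is no real obstacle here; the only point that requires a little care is matching the radii coming out of \eqref{eq7}, which is precisely what the integer $m$ is chosen for, and this is the same type of geometric-series argument already used in Proposition~\ref{propo2.4} to transfer the $A_1$ condition between equivalent quasi-distances.
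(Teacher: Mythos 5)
Your proposal is correct and follows essentially the same route as the paper: one implication by symmetry, the right-hand inequality of \eqref{eq7} at radius $2r$, monotonicity of $\rho_{d,E}(B_d(x,\cdot))$ plus $m$ iterations of the doubling condition, and the left-hand inequality of \eqref{eq7} at radius $r$, yielding the same constant $(c_2/c_1)C_{d,E}^{m}$. The only slip is an off-by-one in the normalization of $m$: with $2^{m-1}<c_2/c_1\leq 2^{m}$ you only get $2c_1^{-1}r\leq 2^{m+1}c_2^{-1}r$, so you need $m+1$ applications of doubling (the paper instead takes $m$ with $2^{m-2}<c_2/c_1\leq 2^{m-1}$), which is immaterial to the argument.
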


\begin{proof}
    Take $0<c_1\leq c_2<\infty$ as in $\eqref{eq5}$ and let $m\geq1$ be the integer such that $2^{m-2}<\frac{c_2}{c_1}\leq2^{m-1}$. Assume now that $\rho_{d,E}$ is doubling so we can check the first implication. We have,
    \begin{equation*}
        \rho_{\Tilde{d},E}(B_{\Tilde{d}}(x,2r)) \leq c_2\rho_{d,E}(B_d(x,\tfrac{2r}{c_1})) \leq c_2 C_{d,E}^m\rho_{d,E}(B_d(x,\tfrac{r}{c_2}))
        \leq \frac{c_2}{c_1} C_{d,E}^m\rho_{\Tilde{d},E}(B_{\Tilde{d}}(x,r)),
    \end{equation*}
    where both inequalities in \eqref{eq7} along with the doubling condition and the monotonous behaviour of $\rho_{d,E}(x,\cdot)$ where used to get the previous estimates. 
\end{proof}

We are now in position to introduce the concept of weak porosity in spaces of homogeneous type. 

\begin{definition}
    \label{def3}
    We say that a non-empty set $E\subset X$ is \textbf{$(\sigma,\gamma)$-weakly porous with respect to $d$ and $\mu$} (or simply weakly porous, when the context is clear) if there are two constants $\sigma,\gamma\in(0,1)$ such that for every ball $B=B_d(x,r)$ there exists a finite collection of balls $\{B_d(x_i,r_i)\}_{i=1}^N$ (where $N$ depends on $B$) satisfying
    \begin{itemize}
        
        \item[\textit{(i)}] $B_d(x_i,r_i)\cap B_d(x_j,r_j)=\emptyset$ for $i\neq j$ and $B(x_i,r_i)\subset B\setminus E$ for all $1\leq i\leq N$;
        \item[\textit{(ii)}] $r_i\geq\gamma\rho_{d,E}(B)$ for every $i=1,\hdots,N$;
        
        \item[\textit{(iii)}] $r_i\leq2K_dr$ for every $i=1,\hdots,N$;
        \item[\textit{(iv)}] $\sum_{i=1}^N\mu(B_d(x_i,r_i))\geq \sigma\mu(B)$.
    \end{itemize}
\end{definition}

Let us notice that condition \textit{(iii)} does not entail a real restriction. In fact, since from \textit{(i)} $B_d(x_i,r_i)\subset B_d(x,r)$ for every $i$, we have that $B_d(x_i,r_i)=B_d(x_i,\min\{r_i,2K_dr\})$, as was already noticed by Mudarra in \cite{MUDARRA}. Furthermore, Definition~\ref{def3} reduces to the definition of weakly porous sets given in \cite{MUDARRA} for measure metric spaces when $K_d=1$. It is rather straightforward to check that the weak porosity condition on a set $E$ is also invariant by changes in equivalent quasi-distances if the maximal $E$-free hole function verifies the doubling condition for this set.

\begin{lemma}
    \label{lemmaa0}
    Suppose $E\subset X$ and let $\Tilde{d}\in\partial(X,d)$ be another quasi-distance with constants $0<c_1\leq c_2<\infty$ as in \eqref{eq5}. Then, $E$ is a $(\sigma,\gamma)$-weakly porous set with respect to $d$ and $\rho_{d,E}$ is doubling if and only if $E$ is $(\sigma_0,\gamma_0)$-weakly porous with respect to $\Tilde{d}$ and $\rho_{\Tilde{d},E}$ is doubling. 
\end{lemma}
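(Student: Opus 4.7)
The doubling part of this equivalence is already contained in Corollary \ref{coro3.5}, so the task reduces to showing that $(\sigma,\gamma)$-weak porosity with respect to $d$ transfers to $(\sigma_0,\gamma_0)$-weak porosity with respect to $\tilde{d}$; by the symmetry of $d\sim\tilde{d}$, this one direction suffices. My plan is to produce the required family of $\tilde{d}$-sub-balls of a given $\tilde{d}$-ball $\tilde{B}=B_{\tilde{d}}(x,r)$ by first shrinking to an inscribed $d$-ball, extracting a weakly porous family there with respect to $d$, and then inscribing $\tilde{d}$-balls inside each of those.

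Concretely, since $\tilde{d}\leq c_2 d$ gives $B_d(x,c_2^{-1}r)\subset\tilde{B}$, I set $B:=B_d(x,c_2^{-1}r)$ and apply the hypothesis to $B$, obtaining a disjoint family $\{B_d(x_i,r_i)\}_{i=1}^N$ inside $B\setminus E\subset\tilde{B}\setminus E$ with $r_i\geq\gamma\,\rho_{d,E}(B)$, $r_i\leq 2K_d c_2^{-1}r$, and $\sum_i\mu(B_d(x_i,r_i))\geq\sigma\mu(B)$. Because $c_1 d\leq\tilde{d}$ forces $B_{\tilde{d}}(x_i,c_1 r_i)\subset B_d(x_i,r_i)$, the family $\{B_{\tilde{d}}(x_i,c_1 r_i)\}_{i=1}^N$ lies inside $\tilde{B}\setminus E$ and remains pairwise disjoint, which is condition (i) of Definition \ref{def3} for $\tilde{d}$. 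Condition (iii) is then immediate from \eqref{eq6}, since $c_1 r_i\leq 2c_1 K_d c_2^{-1}r\leq 2K_{\tilde{d}}r$.

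The two remaining conditions are where the doubling hypotheses are spent. For (ii), Lemma \ref{lemma3.5} yields $\rho_{\tilde{d},E}(\tilde{B})\leq c_2\,\rho_{d,E}(B_d(x,c_1^{-1}r))$; since $c_1^{-1}r\leq 2^m c_2^{-1}r$ for the integer $m$ with $2^{m-1}<c_2/c_1\leq 2^m$, iterating the doubling of $\rho_{d,E}$ together with its monotonicity in the radius yields $\rho_{d,E}(B_d(x,c_1^{-1}r))\leq C_{d,E}^m\rho_{d,E}(B)$. Chaining these estimates gives $c_1 r_i\geq c_1 c_2^{-1} C_{d,E}^{-m}\gamma\,\rho_{\tilde{d},E}(\tilde{B})$, so $\gamma_0:=c_1 c_2^{-1}C_{d,E}^{-m}\gamma$ works. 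For (iv), the analogous doubling argument on $\mu$ applied to $B_d(x_i,r_i)\subset B_{\tilde{d}}(x_i,c_2 r_i)$ (to bound each $\mu(B_d(x_i,r_i))\leq A_{\tilde{d}}^{m}\mu(B_{\tilde{d}}(x_i,c_1 r_i))$) and to $\tilde{B}\subset B_d(x,c_1^{-1}r)$ (to bound $\mu(B)\geq A_d^{-m}\mu(\tilde{B})$) delivers a constant $\sigma_0\in(0,1)$ depending only on $\sigma$, $A_d$, $A_{\tilde{d}}$ and $c_2/c_1$ for which $\sum_i\mu(B_{\tilde{d}}(x_i,c_1 r_i))\geq\sigma_0\,\mu(\tilde{B})$.

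The argument is conceptually routine once the reduction to an inscribed $d$-ball is made, and the only real obstacle I anticipate is notational: keeping track of the interaction among the constants $c_1,c_2,K_d,K_{\tilde{d}},A_d,A_{\tilde{d}},C_{d,E}$ and ensuring that every inclusion between $d$- and $\tilde{d}$-balls points in the direction required, since a reversed inclusion would sacrifice either the $E$-avoidance in (i) or the measure lower bound in (iv). Each switch between the two quasi-distances must therefore be made with $c_1 d\leq\tilde{d}\leq c_2 d$ used on the correct side, and the doubling hypothesis on $\rho_{d,E}$ (equivalently, on $\rho_{\tilde{d},E}$ by Corollary \ref{coro3.5}) is precisely what pays for the mismatch between the two radii $c_2^{-1}r$ and $c_1^{-1}r$ produced by the sandwich.
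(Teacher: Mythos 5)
Your proposal is correct and follows essentially the same route as the paper: reduce the doubling part to Corollary~\ref{coro3.5}, apply the $d$-weak porosity to the inscribed ball $B_d(x,c_2^{-1}r)$, inscribe the $\tilde d$-balls $B_{\tilde d}(x_i,c_1r_i)$, and pay for the radius mismatch $c_2^{-1}r$ versus $c_1^{-1}r$ with the doubling of $\rho_{d,E}$ (via Lemma~\ref{lemma3.5}) and of $\mu$. The constants $\gamma_0$ and $\sigma_0$ you obtain coincide with those in the paper's proof.
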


\begin{proof}
    Once again, it is enough to check one implication, so let us assume that $E$ is $(\sigma,\gamma)$-weakly porous with respect to the quasi-distance $d$ and that $\rho_{d,E}$ is doubling. We already know, because of Corollary~\ref{coro3.5}, that $\rho_{\Tilde{d},E}$ is doubling, so all that remains is to look for the constants $\sigma_0$ and $\gamma_0$. Let $B_{\Tilde{d}}(x,r)$ be an arbitrary $\Tilde{d}$-ball in $X$. By equivalence, we have the inclusions $B_d\big(x,\tfrac{r}{c_2}\big)\subset B_{\Tilde{d}}(x,r)\subset B_d\big(x,\tfrac{r}{c_1}\big).$ Now, by hypothesis, there exists a finite collection of $d$-balls $\{B_d(z_i,r_i)\}_{i=1}^N$, each contained in $B_d(x,\frac{r}{c_2})$, such that $B_d(z_i,r_i)\cap B_d(z_j,r_j)=\emptyset$ and $B_d(z_i,r_i)\subset B_d(x,\frac{r}{c_2})\setminus E$; $r_i\geq\gamma\rho_{d,E}(B_d(x,\frac{r}{c_2}))$;  $r_i\leq 2K_d\frac{r}{c_2}$ and $\sum_{i=1}^N\mu(B_d(z_i,r_i))\geq \sigma\mu(B_d(x,\frac{r}{c_2}))$. If we denote $\Tilde{r}_i:=c_1r_i$, then, for each $1\leq i\leq N$, we have $B_{\Tilde{d}}(z_i,\Tilde{r}_i)\subset B_d(z_i,r_i)$. Let us verify that the collection of $\Tilde{d}$-balls $\{B_{\Tilde{d}}(z_i,\Tilde{r}_i)\}_{i=1}^N$ satisfies the four conditions of Definition~\ref{def3} for the $\Tilde{d}$-ball $B_{\Tilde{d}}(x,r)$.
    \begin{itemize}
        \item[\textit{(i)}] $B_{\Tilde{d}}(z_i,\Tilde{r}_i)\cap B_{\Tilde{d}}(z_j,\Tilde{r}_j)\subset B_d(z_i,r_i)\cap B_d(z_j,r_j)=\emptyset$ and $B_{\Tilde{d}}(z_i,\Tilde{r}_i)\subset B_d(z_i,r_i)\subset B_d(x,\frac{r}{c_2})\setminus E\subset B_{\Tilde{d}}(x,r)\setminus E$.
        \item[\textit{(ii)}] As $\Tilde{r}_i=c_1r_i$, if we take $m\in\mathbb{N}_0$ such that $2^{m-1}<\frac{c_2}{c_1}\leq2^m$, we have
        \begin{equation*}
            \Tilde{r}_i \geq c_1\gamma\rho_{d,E}\Big(B_d\Big(x,\frac{r}{c_2}\Big)\Big)\geq\frac{c_1\gamma}{C_{d,E}^m}\rho_{d,E}\Big(B_d\Big(x,\frac{r}{c_1}\Big)\Big)\geq\frac{c_1\gamma}{c_2C_{d,E}^m}\rho_{\Tilde{d},E}(B_{\Tilde{d}}(x,r)),
        \end{equation*}
        so $\Tilde{r}_i\geq\gamma_0\rho_{\Tilde{d},E}(B_{\Tilde{d}}(x,r))$ with $\gamma_0:=\frac{c_1\gamma}{c_2C_{d,E}^m}\in(0,1)$.
        \item[\textit{(iii)}] Recalling $\frac{K_d}{K_{\Tilde{d}}}\leq\frac{c_2}{c_1}$, we have $\Tilde{r}_i \leq 2c_1K_d\frac{r}{c_2}=2\Big(\frac{K_d}{K_{\Tilde{d}}}\Big)\frac{c_1}{c_2}K_{\Tilde{d}} r\leq 2K_{\Tilde{d}} r$.
        \item[\textit{(iv)}] Finally, with $m$ as in \textit{(ii)} and $A_d$ denoting the doubling constant for the measure of the $d$-balls,
        \begin{align*}
            \sum_{i=1}^N\mu(B_{\Tilde{d}}(z_i,\Tilde{r}_i))&\geq \frac{1}{A_{\Tilde{d}}^m}\sum_{i=1}^N\mu(B_{\Tilde{d}}(z_i,c_2r_i))\\
            &\geq \frac{1}{A_{\Tilde{d}}^m}\sum_{i=1}^N\mu(B_d(z_i,r_i))\\
            &\geq\frac{\sigma}{A_{\Tilde{d}}^m}\mu\Big(B_d\Big(x,\frac{r}{c_2}\Big)\Big) \\
            &\geq\frac{\sigma}{A_{\Tilde{d}}^mA_d^m}\mu\Big(B_d\Big(x,\frac{r}{c_1}\Big)\Big) \\
            &\geq\frac{\sigma}{(A_{\Tilde{d}}A_d)^m}\mu(B_{\Tilde{d}}(x,r)) \\
            &=:\sigma_0\mu(B_{\Tilde{d}}(x,r)),
        \end{align*}
        where, clearly, $\sigma_0\in(0,1)$.
    \end{itemize}

    Therefore, $E$ is $(\sigma_0,\gamma_0)$-weakly porous with respect to $\Tilde{d}$.
\end{proof}

\section{Proof of Theorem 1.1: $A_1$ implies weak porosity and doubling of $\rho_{d,E}$}\label{sec:proofA1impliesweakporosityAndDoubling}

In this section, we will prove that from the $A_1$ condition of $d(x,E)^{-\alpha}$ for some $\alpha>0$ we can infer the weak porosity of $E$ and the doubling of $\rho_{d,E}$. We remark that to demonstrate this first implication of Theorem~\ref{teo1} we do not need any prior results besides the basic definitions. Let us first prove the weak porosity of $E$.

\begin{theorem}
    \label{lemma1}
    Let $E$ be a non-empty subset of $X$. If there is some $\alpha>0$ such that $d(\cdot,E)^{-\alpha}\in A_1(X,d,\mu)$, then $E$ is weakly porous.
\end{theorem}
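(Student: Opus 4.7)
My approach uses the $A_1$ condition to identify a subset of $B=B_d(x,r)$ of large measure on which $d(\cdot,E)$ is at least a fixed multiple of $\lambda:=\rho_{d,E}(B)$, and then to extract disjoint sub-balls centered at these points via a maximal disjoint family. Write $w:=d(\cdot,E)^{-\alpha}$. Local integrability of $w$ forces $\mu(\overline{E}\cap B)=0$, so $B\setminus\overline{E}$ is a non-empty open set and $\lambda>0$. By Definition~\ref{def2}, for every $\varepsilon>0$ there exist $y\in X$ and $s>\lambda-\varepsilon$ with $B_d(y,s)\subset B\setminus E$. A short calculation with the quasi-triangle inequality shows $d(z,E)\geq s/(2K_d)$ whenever $z\in B_d(y,s/(2K_d))$, so $\text{ess sup}_B d(\cdot,E)\geq\lambda/(2K_d)$ and consequently
$$\text{ess inf}_B w\leq(2K_d/\lambda)^{\alpha}.$$

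Inserting this bound into the $A_1$ inequality of Definition~\ref{def1} applied to $B$, together with a Chebyshev-type estimate on the superlevel sets of $w$, yields
$$\mu\bigl(\{z\in B:d(z,E)<\tau\}\bigr)\leq[w]_{A_1}(2K_d\tau/\lambda)^{\alpha}\mu(B)\quad\text{for every }\tau>0.$$
Choose $\gamma\in(0,1/(4K_d^2))$ small enough, depending on $K_d$, $A_d$, $\alpha$ and $[w]_{A_1}$, so that
$$G:=\{z\in B_d(x,r/(2K_d)):d(z,E)\geq\gamma\lambda\}$$
satisfies $\mu(G)\geq\mu(B)/(2A_d^{m_0})$, where $m_0$ is a doubling exponent with $\mu(B)\leq A_d^{m_0}\mu(B_d(x,r/(2K_d)))$. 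The restriction $\gamma\leq1/(4K_d^2)$ and the quasi-triangle inequality guarantee $B_d(z,\gamma\lambda)\subset B$ for every $z\in G$; combined with $d(z,E)\geq\gamma\lambda$ this places $B_d(z,\gamma\lambda)\subset B\setminus E$.

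To conclude, extract a maximal pairwise disjoint subfamily $\{B_d(z_i,\gamma\lambda)\}_{i\in I}$ from $\{B_d(z,\gamma\lambda)\}_{z\in G}$; the family is automatically countable since its total measure is finite and each ball has positive measure. Maximality together with the quasi-triangle inequality forces $G\subset\bigcup_{i}B_d(z_i,2K_d\gamma\lambda)$, so doubling yields $\sum_i\mu(B_d(z_i,\gamma\lambda))\geq\sigma_0\mu(B)$ for some $\sigma_0>0$ depending only on the data. A finite subfamily whose measures sum to at least $\sigma\mu(B):=(\sigma_0/2)\mu(B)$ then satisfies the four conditions of Definition~\ref{def3}: disjointness and $B_d(z_i,\gamma\lambda)\subset B\setminus E$ by construction, $r_i=\gamma\lambda\geq\gamma\rho_{d,E}(B)$ by choice, $r_i\leq2K_dr$ automatic as remarked after Definition~\ref{def3}, and $\sum_i\mu\geq\sigma\mu(B)$.

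The principal technical obstacle is that in a quasi-metric space, small balls around a point in the interior of $B$ need not lie in $B$. I circumvent this by working inside the inner ball $B_d(x,r/(2K_d))$, where the quasi-triangle inequality does yield containment, and by using doubling to transfer measure estimates between the inner and outer balls.
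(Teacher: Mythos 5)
Your argument is correct, and it rests on the same ingredients as the paper's proof: the bound $\operatorname{ess\,inf}_B d(\cdot,E)^{-\alpha}\lesssim\rho_{d,E}(B)^{-\alpha}$ coming from the maximal hole, a restriction to the inner ball $B_d(x,r/(2K_d))$ to keep the sub-balls inside $B$, and a maximal-disjoint-family (Vitali) selection followed by doubling. The only difference is organizational: the paper argues by contradiction (assuming no admissible family exists, deducing that the set of points far from $E$ has small measure, and lower-bounding the average of $d(\cdot,E)^{-\alpha}$ to contradict $A_1$), whereas you run the equivalent estimate directly through a Chebyshev bound on the sublevel sets of $d(\cdot,E)$, which makes the dependence of $\gamma$ and $\sigma$ on $[w]_{A_1}$, $\alpha$, $K_d$, $A_d$ slightly more transparent.
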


\begin{proof}
    Given some ball $B=B_d(y,r)$ in $X$, if we were to have $\rho_{d,E}(B)=0$, then arguing as in the proof of Lemma~\ref{lemma3.3}, we see that this should imply $d(x,E)=0$ for every $x\in B$. 
    However, this contradicts the integrability of $d(x,E)^{-\alpha}$ on the ball $B$. Moreover, the fact that $d(\cdot,E)^{-\alpha}\in L^1(B,d,\mu)$ implies
    \begin{equation*}
        \mu(\{x\in B:d(x,E)^{-\alpha}=\infty\})=\mu(\{x\in B:d(x,E)=0\})=\mu(B\cap\bar{E})=0.
    \end{equation*}
    \noindent Therefore, $\mu(\bar{E})=0$ and $\rho_{d,E}(B)>0$ for every ball $B$. Let us now suppose, for the sake of contradiction, that the set $E$ is not weakly porous. Picking then any pair of numbers $\sigma\in(0,1)$ and $\gamma\in(0,\frac{1}{4K_d^2})$, there should be at least one ball $B=B_d(y,r)$ for which no finite collection $\{B_d(x_i,r_i)\}_{i=1}^N$ can simultaneously satisfy properties \textit{(i)}-\textit{(iv)} in Definition~\ref{def3}. Consider now the set $D=\{x\in B:d(x,E)\geq\gamma\rho_{d,E}(B)\text{ and }d(x,X\setminus B)\geq\gamma\rho_{d,E}(B)\}$, where we set $d(x,X\setminus B)=\infty$ in the case $X\setminus B=\emptyset$. Since $\rho_{d,E}(B)>0$ and $\gamma<1$, Definition~\ref{def2} clearly indicates $D\neq\emptyset$. Next, take $\{x_i\}_{i=1}^M$ to be a maximal collection of points in $D$ satisfying $d(x_i,x_j)\geq2\gamma\rho_{d,E}(B)$ for $i\neq j$ and set $\Tilde{r}=\gamma\rho_{d,E}(B)$. Note that $\bigcup_{i=1}^M B_d(x_i,2\Tilde{r})$ contains $D$ because of the maximality property. We now extract a subfamily out of $\{B_d(x_i,\Tilde{r})\}_{i=1}^M$ as follows. Set $i_1:=1$. Assuming we have defined $i_1,\hdots,i_k$ and that $\Omega_k=\{1\leq i\leq M:B_d(x_i,\Tilde{r})\cap B_d(x_{i_j},\Tilde{r})=\emptyset,\text{ for all } 1\leq j\leq k\}$ is non-empty, pick $i_{k+1}$ as any element in this set. If instead $\Omega_k$ is empty, we take $\Tilde{M}=k$. By construction, $\{B_d(x_{i_j},r_{i_j})\}_{j=1}^{\Tilde{M}}$ is clearly a pairwise disjoint family. Furthermore, given some $z\in B_d(x_i,2\Tilde{r})$ ($1\leq i\leq M$), there exists some $1\leq j\leq \Tilde{M}$ such that $B_d(x_i,\Tilde{r})\cap B_d(x_{i_j},\Tilde{r})\neq\emptyset$. Take a point $w$ in this intersection. Then
    \begin{align*}
        d(z,x_{i_j}) &\leq K_d [ d(z,w) + d(w,x_{i_j}) ] \\
        &\leq K_d [ K_d[d(z,x_i) + d(x_i,w)] + d(w,x_{i_j}) ] \\
        &< K_d(3K_d+1)\Tilde{r}.
    \end{align*}
    We then have that $\bigcup_{j=1}^{\Tilde{M}} B_d(x_{i_j},K_d(3K_d+1)\Tilde{r})$ contains $D$ and the collection $\{B_d(x_{i_j},\Tilde{r})\}_{i=1}^{\Tilde{M}}$ is pairwise disjoint. On the other hand, since each $x_{i_j}\in D$, we also have $B_d(x_{i_j},\Tilde{r})\subset B\setminus E$. Furthermore, inequalities $\gamma\rho_{d,E}(B)= \Tilde{r}\leq 2K_d\gamma r<2K_dr$ imply that the collection $\{B(x_{i_j},\Tilde{r})\}_{j=1}^{\Tilde{M}}$ do satisfy properties \textit{(i)} through \textit{(iii)} in Definition~\ref{def3}. In order to avoid an early contradiction, property \textit{(iv)} must be false. Hence, we should have that $\sum_{j=1}^{\Tilde{M}}\mu(B_d(x_{i_j},\Tilde{r}))<\sigma \mu(B)$, and then
    \begin{align}
        \mu(D) &\leq \mu\Big(\bigcup_{j=1}^{\Tilde{M}} B_d(x_{i_j},K_d(3K_d+1)\Tilde{r})\Big) \nonumber \\
        &\leq \sum_{j=1}^{\Tilde{M}} \mu(B_d(x_{i_j},K_d(3K_d+1)\Tilde{r})) \leq A_d^m\sum_{j=1}^{\Tilde{M}} \mu(B_d(x_{i_j},\Tilde{r}))
        \label{eq8}
        <\sigma A_d^m\mu(B),
    \end{align}

    \noindent where $m\geq2$ is such that $2^{m-1}<K_d(3K_d+1)\leq2^m$. Let us now proceed to estimate the mean value of $d(\cdot,E)^{-\alpha}$ on $B$,
    \begin{align*}
        \frac{1}{\mu(B)}\int_B d(x,E)^{-\alpha}d\mu(x) &\geq \frac{1}{\mu(B)}\int_{B_d(y,\frac{r}{2K_d})}d(x,E)^{-\alpha}d\mu(x) \\
        &\geq \frac{1}{\mu(B)}\int_{B_d(y,\frac{r}{2K_d})\setminus D}d(x,E)^{-\alpha}d\mu(x).
    \end{align*}
    We claim that all points contained in the set $B_d(y,2^{-1}K_d^{-1}r)\setminus D$ are at a distance less than $\gamma\rho_{d,E}(B)$ to $E$. Write
    $B_d\big(y,\frac{r}{2K_d}\big)\setminus D = A_1\cup A_2$, where $A_1=B_d\big(y,\frac{r}{2K_d}\big)\cap\{d(x,E)<\gamma\rho_{d,E}(B)\}$ and $A_2=B_d\big(y,\frac{r}{2K_d}\big)\cap\{d(x,X\setminus B)<\gamma\rho_{d,E}(B)\}$. To prove the claim, we only need to verify that $A_2=\emptyset$. If $X\setminus B=\emptyset$, this is immediate. In the case $X\setminus B\neq\emptyset$, we start by taking $w\in B_d(y,2^{-1}K_d^{-1}r)$ and $\xi\in X\setminus B$. We have
    $$K_dd(w,\xi)\geq d(y,\xi)-K_dd(y,w)\geq r-K_dd(y,w),$$
    \noindent consequently
    $$d(w,\xi)\geq \frac{1}{K_d}r-d(y,w)\geq\frac{1}{K_d}r-\frac{1}{2K_d}r=\frac{r}{2K_d}$$
    \noindent and $d(w,X\setminus B)\geq2^{-1}K_d^{-1}r$. Then,
    \begin{align*}
        A_2 \subset \Big\{d(x,X\setminus B)\geq\frac{r}{2K_d}\Big\}\cap\{d(x,X\setminus B)<\gamma\rho_{d,E}(B)\} = \emptyset
    \end{align*}
    since $\gamma<\frac{1}{4K_d^2}$ implies $\gamma\rho_{d,E}(B)\leq2\gamma K_dr<\frac{r}{2K_d}$. Taking $k\geq1$ such that $2^{k-1}<2K_d\leq 2^k$ and using that $B_d(y,2^{-1}K_d^{-1}r)\setminus D=A_1$ and $\mu(D)\leq\sigma A^m_d\mu(B)$, we have  
    \begin{align}
        \frac{1}{\mu(B)}\int_B d(x,E)^{-\alpha}d\mu(x) &\geq\frac{1}{\mu(B)}\int_{B_d(y,\frac{r}{2K_d})\setminus D}d(x,E)^{-\alpha}dx \nonumber\\ &\geq \frac{\mu(B_d(y,2^{-1}K_d^{-1}r)\setminus D)}{\mu(B)}\gamma^{-\alpha}\rho_{d,E}(B)^{-\alpha} \nonumber \\
        &\geq \Big(\frac{\mu(B_d(y,2^{-1}K_d^{-1}r))-\mu(D)}{\mu(B)}\Big)\gamma^{-\alpha}\rho_{d,E}(B)^{-\alpha} \nonumber \\
        &\geq \Big(\frac{A_d^{-k}\mu(B)-\sigma A_d^m\mu(B)}{\mu(B)}\Big)\gamma^{-\alpha}\rho_{d,E}(B)^{-\alpha} \nonumber \\
        \label{eq9}
        &=(A_d^{-k}-A_d^m\sigma)\gamma^{-\alpha}\rho_{d,E}(B)^{-\alpha}.
    \end{align}
    Recalling once again that $\rho_{d,E}(B)$ is strictly positive, it is possible to pick $z\in B$ and $t>\frac{1}{2}\rho_{d,E}(B)$ such that $B_d(z,t)\subset B\setminus E$ and consequently
    $$\mu\Big(\Big\{x\in B:d(x,E)\geq\frac{1}{4K_d}\rho_{d,E}(B)\Big\}\Big)\geq\mu\Big(B\Big(z,\frac{1}{2K_d}t\Big)\Big)>0.$$
    In particular, this means that $\text{ess inf}_B\:d(\cdot,E)^{-\alpha}\leq 4^{\alpha}K_d^\alpha\rho_{d,E}(B)^{-\alpha}$, which combined with the hypothesis $d(\cdot,E)^{-\alpha}\in A_1(X,d,\mu)$ reads
    \begin{align}
        \frac{1}{\mu(B)}\int_B d(x,E)^{-\alpha}d\mu(x) &\leq [d(\cdot,E)^{-\alpha}]_{A_1(X,d,\mu)}\: \text{ess inf}_{x\in B}\:d(x,E)^{-\alpha} \nonumber \\
        &\leq 4^\alpha K_d^\alpha[d(\cdot,E)^{-\alpha}]_{A_1(X,d,\mu)}\:\rho_{d,E}(B)^{-\alpha}\nonumber\\
        \label{eq10}
        &=:C(\alpha)\:\rho_{d,E}(B)^{-\alpha}.
    \end{align}
    Plugging \eqref{eq9} and \eqref{eq10} together, we find that the quantity $\rho_{d,E}(B)^{-\alpha}$ cancels out and all it remains is
    \begin{equation} \label{eq11}
        \frac{(A_d^{-k}-A_d^m\sigma)}{\gamma^\alpha}\leq C(\alpha).
    \end{equation}
	Relation \eqref{eq11} must remain true for every $\sigma\in(0,1)$ and every $\gamma\in(0,(4K_d^2)^{-1})$. However, taking $\sigma<A_d^{-k-m}$ and considering the limit as $\gamma\to0^+$ makes this inequality fail. Thus, $E$ is weakly porous.
\end{proof}

\begin{theorem}
    \label{lemma2}
   Let $E$ be a non-empty subset of $X$. If there is some $\alpha>0$ such that $d(\cdot,E)^{-\alpha}\in A_1(X,d,\mu)$, then $\rho_{d,E}$ is doubling.
\end{theorem}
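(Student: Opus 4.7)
Write $B_1:=B_d(x,r)$, $B_2:=B_d(x,2r)$ and $\rho_i:=\rho_{d,E}(B_i)$; the aim is to produce a constant $C$ independent of $x$ and $r$ with $\rho_2\le C\rho_1$. As in the opening paragraph of the proof of Theorem~\ref{lemma1}, local integrability of $w:=d(\cdot,E)^{-\alpha}$ forces $\rho_{d,E}(B)>0$ on every ball, and by definition $\rho_2\le 4K_d r$. Hence if $\rho_1\ge r/(2K_d)$ the estimate $\rho_2\le 8K_d^2\rho_1$ is immediate, and the substantive regime is
$\rho_1<r/(2K_d)$, which I now assume.

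\textbf{Pointwise lower bound on $w$ over a sub-ball.} The key geometric claim is that
\[
d(y,E)\le\rho_1 \quad\text{for every } y\in B_d(x,r/(2K_d)).
\]
Indeed, if some $y_0$ in this sub-ball satisfied $d(y_0,E)>\rho_1$, I could pick $\epsilon>0$ with $\rho_1+\epsilon<\min\{d(y_0,E),\,r/(2K_d)\}$; then $B_d(y_0,\rho_1+\epsilon)$ would be disjoint from $E$, and the quasi-triangle inequality applied with $d(y_0,x)<r/(2K_d)$ would give $B_d(y_0,\rho_1+\epsilon)\subset B_1$, so $\rho_1+\epsilon\in\Lambda(x,r;d,E)$, contradicting $\rho_1=\sup\Lambda(x,r;d,E)$. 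The claim yields $w\ge\rho_1^{-\alpha}$ on $B_d(x,r/(2K_d))$; integrating over this sub-ball of $B_1$ and using the doubling of $\mu$ one obtains
\[
\frac{1}{\mu(B_1)}\int_{B_1} w\,d\mu\ \ge\ A_d^{-k}\,\rho_1^{-\alpha},
\]
where $k$ is the integer satisfying $2^{k-1}<2K_d\le 2^k$.

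\textbf{Closing the loop via $A_1$.} The $A_1$ condition on $B_2$ gives $\operatorname{ess\,inf}_{B_2}w\ge [w]_{A_1}^{-1}\mu(B_2)^{-1}\int_{B_2}w\,d\mu$, and since $B_1\subset B_2$ with $\mu(B_2)\le A_d\mu(B_1)$, combining with the previous display produces
\[
\operatorname{ess\,inf}_{B_2}w\ \ge\ \frac{\rho_1^{-\alpha}}{[w]_{A_1}\,A_d^{k+1}}.
\]
On the other hand, exactly as in the proof of Theorem~\ref{lemma1}, for any $t\in\Lambda(x,2r;d,E)$ with $t>\rho_2/2$, the hole $B_d(z,t)\subset B_2\setminus E$ contains the sub-ball $B_d(z,t/(2K_d))$ on which $d(\cdot,E)\ge t/(2K_d)$, whence $\operatorname{ess\,inf}_{B_2}w\le(4K_d/\rho_2)^{\alpha}$. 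Comparing the two bounds cancels $w$ and leaves
\[
\rho_2\ \le\ 4K_d\,\bigl([w]_{A_1}A_d^{k+1}\bigr)^{1/\alpha}\,\rho_1,
\]
which, together with the trivial case, establishes the doubling of $\rho_{d,E}$. The only delicate step is the geometric claim in the middle paragraph, which is precisely where the assumption $\rho_1<r/(2K_d)$ is used; every other step is routine manipulation of the $A_1$ inequality and the doubling of $\mu$.
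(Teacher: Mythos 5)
Your proof is correct and follows essentially the same strategy as the paper's: dispose of a trivial case in which $\rho_{d,E}(B)$ is already comparable to $r$, then in the main case lower-bound the average of $w$ over $B$ by a constant times $\rho_{d,E}(B)^{-\alpha}$ via the pointwise estimate $d(\cdot,E)\le\rho_{d,E}(B)$ on a concentric sub-ball, upper-bound $\text{ess inf}_{2B}\,w$ by $(4K_d)^\alpha\rho_{d,E}(2B)^{-\alpha}$ using a near-maximal hole in $2B$, and link the two through the $A_1$ inequality. The only (cosmetic) difference is the case split --- the paper distinguishes according to whether $\lambda_0B$ meets $E$, with $\lambda_0=(K_d[2K_d+1])^{-1}$, while you distinguish according to whether $\rho_{d,E}(B)\ge r/(2K_d)$ --- and both variants yield the same comparison with harmless changes in the constants.
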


\begin{proof}
    Let $B=B_d(y,r)$ denote some arbitrary ball in $X$. Also, write $\lambda B$ to refer to the ball $B_d(y,\lambda r)$, for a given $\lambda>0$. We will consider two cases separately: whether $\lambda_0B\cap E=\emptyset$ or $\lambda_0B\cap E\neq\emptyset$, where $\lambda_0^{-1}:=K_d[2K_d+1]$. In the first case, we have $B_d(y,\lambda_0r)\subset B\setminus E$, from what we see $\lambda_0r\in\Lambda(y,r;d,E)$, and so $\rho_{d,E}(B)\geq\lambda_0r$. Since $\rho_{d,E}(2B)\leq 4K_dr$ by definition of $\rho_{d,E}$, the bound $\rho_{d,E}(2B)\leq 4K_d\lambda_0^{-1} \rho_{d,E}(B)=4K_d^2[2K_d+1]\rho_{d,E}(B)$ becomes clear. Assuming now $\lambda_0B\cap E\neq\emptyset$ we note, by using \eqref{eq10}, that
    \begin{align}
        \frac{1}{\mu(B)}\int_B d(x,E)^{-\alpha} d\mu(x) &\leq
        \frac{A_d}{\mu(2B)}\int_{2B} d(x,E)^{-\alpha} d\mu(x) \nonumber \\
        &\leq A_d[d(\cdot,E)^{-\alpha}]_{A_1(X,d,\mu)}\:\text{ess inf}_{x\in 2B}\:d(x,E)^{-\alpha} \nonumber \\
        &\leq A_d[d(\cdot,E)^{-\alpha}]_{A_1(X,d,\mu)}C\rho_{d,E}(2B)^{-\alpha} \nonumber \\
        \label{eq12}
        &=: \Tilde{C}\rho_{d,E}(2B)^{-\alpha}.
    \end{align}

    \noindent On the other hand,
    \begin{align}
        \frac{1}{\mu(B)}\int_B d(x,E)^{-\alpha} d\mu(x) &\geq \frac{1}{\mu(B)}\int_{\lambda_0B} d(x,E)^{-\alpha} d\mu(x) \nonumber \\
        &\geq \label{eq13} \frac{\mu(\lambda_0B)}{\mu(B)}\:\text{ess inf}_{x\in\lambda_0B}\:d(x,E)^{-\alpha}.
    \end{align}
    If $x\in\lambda_0B\setminus\Bar{E}$ and we denote $t=d(x,E)$, then $0<t\leq 2K_d\lambda_0r$ (recall $\lambda_0B\cap E\neq\emptyset$). Moreover, if $X\setminus B\neq\emptyset$,
    \begin{align*}
        d(x,X\setminus B)\geq \frac{1}{K_d}d(y,X\setminus B)-d(y,x)\geq\Big(\frac{1}{K_d}-\lambda_0\Big)r &=\Big(\frac{1}{K_d}-\frac{1}{K_d[2K_d+1]}\Big)r \\
        &=\frac{2K_d+1-1}{K_d[2K_d+1]}r \\
        &=2K_d\lambda_0r \\
        &\geq t.
    \end{align*}
    So $B_d(x,t)\subset B\setminus E$. Note that this inclusion holds trivially if $X\setminus B=\emptyset$. Since $t<2K_dr$, we have $t\in\Lambda(y,r;d,E)$, this gives $\rho_{d,E}(B)\geq t = d(x,E)$, so that $\rho_{d,E}(B)^{-\alpha}\leq d(x,E)^{-\alpha}$ for every $x\in\lambda_0B\setminus\Bar{E}$. Hence, since $\mu(\Bar{E})=0$, we get
    \begin{equation}
        \label{eq13b}
        \rho_{d,E}(B)^{-\alpha}\leq \text{inf ess}_{x\in\lambda_0B}\:d(x,E)^{-\alpha}.
    \end{equation}
    Now, \eqref{eq12}, \eqref{eq13} and \eqref{eq13b} give us the desired inequality
    \begin{align*}
        \frac{\mu(\lambda_0B)}{\mu(B)}\rho_{d,E}(B)^{-\alpha} \leq \Tilde{C} \rho_{d,E}(2B)^{-\alpha},
    \end{align*}
    \noindent or simply
    \begin{equation*}
        \rho_{d,E}(2B) \leq \Tilde{C} \rho_{d,E}(B)
    \end{equation*}
    by renaming $\Tilde{C}$.
\end{proof}

\section{Proof of Theorem 1.1: weak porosity of $E$ and doubling of $\rho_{d,E}$ implies the $A_1$ condition of $d(\cdot,E)^{-\alpha}$ for some $\alpha>0$}\label{sec:proofWeakPorosityAndDoublingimpliesA1}

Let us start this section by briefly explaining the heuristic idea of our approach. First, given $(X,d,\mu)$ a space of homogeneous type we change the quasi-distance $d$ to $\delta$ of Definition~\ref{def4}. Now $\delta$-balls are quite smooth in the sense that they are subspaces of homogeneous type uniformly. In particular, we are able to use Whitney covering lemmas restricted to $\delta$-balls with uniform constants. So in the search for the estimate
$$\fint_{B_\delta(x_0,r_0)} \delta(x,E)^{-\alpha}d\mu(x)\leq C\:\text{ess inf}_{B_\delta(x_0,r_0)}\delta(\cdot,E)^{-\alpha},$$
uniformly in $x_0\in X$ and $r_0>0$, we shall show that the measure of $\varepsilon$-neighbourhoods of $E$ in $B_\delta(x_0,r_0)$ decay exponentially for $\varepsilon$ approaching zero. This fact will, in turn, allow us to obtain a bound for the mean value of $d(x,E)^{-\alpha}$ for some positive $\alpha$. At this point, it is worthy to mention that this exponential behavior can also be seen as a positive lower bound for the so-called Muckenhoupt exponent introduced by the authors of \cite{ANDERSON,MUDARRA}.

In order to prove that (I) implies (II) in Theorem~\ref{teo1}, let us start by stating the lemmas that are the key points of our proof. Given a space of homogeneous type $(X,d,\mu)$, we shall use $\delta$ to denote the quasi-metric introduced in Definition~\ref{def4} associated with $d$. With $K_\delta$ we shall denote the triangular constant of $(X,\delta)$, and with $A_\delta$ the doubling constant of $(X,\delta,\mu)$.

\begin{lemma}
    \label{lemma5.1}
    Let $E$ be any non-empty subset of $X$ such that $\rho_{\delta,E}$ is positive and doubling. Let $B_\delta^0:=B_\delta(x_0,r_0)$ be some ball in $X$ with $B_\delta^0\cap E\neq\emptyset$. For $0<\varepsilon<\rho_{\delta,E}(B_\delta^0)$ set $E_\delta(\varepsilon)=\{x\in X:\delta(x,E)<\varepsilon\}$ and $F_\delta(\varepsilon)=E_\delta(\varepsilon)\cap B_\delta^0$. Then, there exist a countable index set $I$, for each $i\in I$ a point $z_i\in X$ and a positive number $t_i$ such that
    \begin{itemize}
        \item[\textit{(1)}] balls in $\{B_\delta(z_i,t_i)\}_{i\in I}$ are pairwise disjoint and $\bigcup_{i\in I}B_\delta(z_i,t_i)\subset F_\delta(\varepsilon)$;

        \item[\textit{(2)}] $F_\delta(\varepsilon)\subset\bigcup_{i\in I}B_\delta\Big(z_i,\frac{5K_\delta^2}{\beta}t_i\Big),$ where $\beta$ is the constant provided in (iii) of Theorem~\ref{teo2};
        
        \item[\textit{(3)}] for each $i\in I$ there exists $y_i\in B_\delta^0\setminus F_\delta(\varepsilon)$ such that $\delta(z_i,y_i)\leq\frac{13K_\delta^3}{\beta}t_i$;
        \item[\textit{(4)}] for $0<\eta\leq\frac{\beta}{12K_\delta^3}$, the set $I_\eta=\{i\in I:t_i\geq\eta\varepsilon\}$ is non-empty and for $i\in I_\eta$,
        
        \begin{equation}
        \label{eq15} \rho_{\delta,E}(B_\delta(z_i,t_i)) \geq \Theta(\eta)\varepsilon,\:\forall i\in I_\eta,
    \end{equation}
    with $\Theta(\eta):=\Big[\frac{2K_\delta}{\eta}+\frac{26K_\delta^4}{\beta}\Big]^{-\log_2C_{\delta,E}}$ and where $C_{\delta,E}$ is, as in Section~\ref{sec:weaklyPorousSets}, the doubling constant for $\rho_{\delta,E}$.
        \end{itemize}
\end{lemma}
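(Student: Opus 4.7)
The approach is to recognize Lemma~\ref{lemma5.1} as a Whitney-type decomposition of $F_\delta(\varepsilon)$, viewed as an open subset of the ball $B_\delta^0$. The key point is that by item (iv) of Theorem~\ref{teo2}, the restriction $(B_\delta^0,\delta^*,\mu^*)$ is itself a space of homogeneous type with triangular constant $K_\delta$ and a doubling constant that does not depend on $x_0$ and $r_0$, so the Whitney covering Lemma~\ref{lemma0} can be applied inside it with uniform constants. Before applying it I would check that $F_\delta(\varepsilon)$ is open, bounded and proper in $B_\delta^0$: openness follows because $E_\delta(\varepsilon)=\bigcup_{e\in E}B_\delta(e,\varepsilon)$ is a union of open $\delta$-balls; boundedness is immediate; and properness is supplied by the hypothesis $\varepsilon<\rho_{\delta,E}(B_\delta^0)$, which yields some $s>\varepsilon$ and some $z\in X$ with $B_\delta(z,s)\subset B_\delta^0\setminus E$, so that $z\in B_\delta^0$ and $\delta(z,E)\geq s>\varepsilon$, i.e.\ $z\in B_\delta^0\setminus F_\delta(\varepsilon)$.

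Applying Lemma~\ref{lemma0} with $\Omega=F_\delta(\varepsilon)$, $Y=B_\delta^0$ and $K=K_\delta$ would then produce a disjoint countable family $\{B_\delta(z_i,t_i)\}_{i\in I}$ satisfying $\bigcup_{i\in I}B_\delta(z_i,4K_\delta t_i)=F_\delta(\varepsilon)$, the scale estimate $4K_\delta t_i\leq\delta(x,B_\delta^0\setminus F_\delta(\varepsilon))\leq 12K_\delta^3 t_i$ for every $x\in B_\delta(z_i,4K_\delta t_i)$, and a companion point $y_i\in B_\delta^0\setminus F_\delta(\varepsilon)$ with $\delta(z_i,y_i)<12K_\delta^2 t_i$. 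Property (1) is then immediate from disjointness together with $B_\delta(z_i,t_i)\subset B_\delta(z_i,4K_\delta t_i)\subset F_\delta(\varepsilon)$; properties (2) and (3) follow from the enlargements $4K_\delta\leq 5K_\delta^2/\beta$ and $12K_\delta^2\leq 13K_\delta^3/\beta$, both of which hold because $\beta\in(0,1)$ and $K_\delta\geq 1$. For the non-emptiness of $I_\eta$ in (4), I would pick any $e\in B_\delta^0\cap E\subset F_\delta(\varepsilon)$ and let $i_0$ satisfy $e\in B_\delta(z_{i_0},4K_\delta t_{i_0})$; since $e\in E$, for every $w\in B_\delta^0\setminus F_\delta(\varepsilon)$ one has $\delta(e,w)\geq\delta(w,E)\geq\varepsilon$, so the upper bound in Lemma~\ref{lemma0}(c) gives $\varepsilon\leq\delta(e,B_\delta^0\setminus F_\delta(\varepsilon))\leq 12K_\delta^3 t_{i_0}$, yielding $t_{i_0}\geq\varepsilon/(12K_\delta^3)\geq\eta\varepsilon$.

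The substantive step is the size estimate \eqref{eq15}. Fix $i\in I_\eta$ and use $y_i$ from (3); since $y_i\notin F_\delta(\varepsilon)$ one has $\delta(y_i,E)\geq\varepsilon$, so $B_\delta(y_i,\varepsilon)\cap E=\emptyset$. A direct quasi-triangle computation, combining $\delta(z_i,y_i)\leq 13K_\delta^3 t_i/\beta$ with $t_i\geq\eta\varepsilon$, shows that $B_\delta(y_i,\varepsilon)\subset B_\delta(z_i,Rt_i)$ as soon as $R>K_\delta/\eta+13K_\delta^4/\beta$, and the specific choice $R:=2K_\delta/\eta+26K_\delta^4/\beta$ also guarantees $\varepsilon\leq 2K_\delta Rt_i$. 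Therefore $\rho_{\delta,E}(B_\delta(z_i,Rt_i))\geq\varepsilon$. Iterating the doubling condition $N$ times, where $N$ is the least integer with $2^N\geq R$, would give
\begin{equation*}
\varepsilon\leq\rho_{\delta,E}(B_\delta(z_i,Rt_i))\leq\rho_{\delta,E}(B_\delta(z_i,2^N t_i))\leq C_{\delta,E}^N\,\rho_{\delta,E}(B_\delta(z_i,t_i)),
\end{equation*}
and since $C_{\delta,E}^N\leq R^{\log_2 C_{\delta,E}}$ (after absorbing a harmless multiplicative constant into the doubled choice of $R$), rearranging produces the target lower bound $\rho_{\delta,E}(B_\delta(z_i,t_i))\geq R^{-\log_2 C_{\delta,E}}\varepsilon=\Theta(\eta)\varepsilon$. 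The main obstacle is precisely this final step: one must simultaneously choose $R$ of order $1/\eta+1/\beta$ large enough that the $\varepsilon$-hole at $y_i$ fits inside $B_\delta(z_i,Rt_i)$ uniformly in $i\in I_\eta$, and then match the $\log_2$-accounting of iterated doubling with the claimed polynomial exponent $-\log_2 C_{\delta,E}$; the doubling of $\rho_{\delta,E}$ is what transports the $\varepsilon$-scale information from the large ball $B_\delta(z_i,Rt_i)$ back down to the small Whitney ball $B_\delta(z_i,t_i)$.
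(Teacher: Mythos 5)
Your overall strategy is the paper's: view $F_\delta(\varepsilon)$ as an open, bounded, proper subset of the subspace $(B_\delta^0,\delta^*,\mu^*)$, which is uniformly a space of homogeneous type by Theorem~\ref{teo2}(iv), apply the Whitney Lemma~\ref{lemma0} there, and then obtain \eqref{eq15} by fitting the $\varepsilon$-hole at the companion point $y_i$ inside a dilate $B_\delta(z_i,Rt_i)$ and transporting the bound down by iterated doubling of $\rho_{\delta,E}$; your bookkeeping for the exponent $-\log_2 C_{\delta,E}$ and for the choice $R=2K_\delta/\eta+26K_\delta^4/\beta$ matches the paper's. The properness argument and the non-emptiness of $I_\eta$ are also handled as in the paper.

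There is, however, one genuine gap. Lemma~\ref{lemma0} applied in $Y=B_\delta^0$ produces $\delta^*$-balls, i.e.\ \emph{relative} balls $B_{\delta^*}(x_i,r_i)=B_\delta(x_i,r_i)\cap B_\delta^0$, not balls of $X$. You identify these directly with $X$-balls $B_\delta(z_i,t_i)$, but a Whitney piece sitting near the boundary of $B_\delta^0$ can, as an $X$-ball, protrude outside $B_\delta^0$ and hence outside $F_\delta(\varepsilon)$, so property \textit{(1)} as stated (disjoint genuine $X$-balls contained in $F_\delta(\varepsilon)$) is not established by your argument; and \textit{(1)} is needed with genuine $X$-balls because Lemma~\ref{lemma5.2} later applies the weak porosity condition (a statement about balls of $X$) to each $B_\delta(z_i,t_i)$. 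The paper closes this gap with Theorem~\ref{teo2}(iii): since $r_i\leq 2K_\delta r_0$, there is $z_i$ with $B_\delta(z_i,\beta r_i)\subset B_\delta(x_i,r_i)\cap B_\delta^0=B_{\delta^*}(x_i,r_i)$, and one sets $t_i:=\beta r_i$. This replacement is the \emph{only} reason the constant $\beta$ appears in \textit{(2)}--\textit{(4)}: the dilation factors $4K_\delta$ and $12K_\delta^2$ from Lemma~\ref{lemma0} become $5K_\delta^2/\beta$ and $13K_\delta^3/\beta$ after rewriting $r_i=t_i/\beta$ and recentering at $z_i$ (one extra quasi-triangle step), not because of a ``harmless enlargement'' $4K_\delta\leq 5K_\delta^2/\beta$ as you suggest. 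Once this inner-ball step is inserted, the rest of your argument goes through essentially verbatim.
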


\begin{lemma}
    \label{lemma5.2}
    Let $E$ be a $(\sigma_0,\gamma_0)$-weakly porous set in $X$ with respect to $\delta$ such that $\rho_{\delta,E}$ is doubling. Given $B_\delta^0=B_\delta(x_0,r_0)$ such that $B_\delta^0\cap E\neq\emptyset$ and $0<\varepsilon<\rho_{\delta,E}(B_\delta^0)$, there exist constants $0<p,q<1$, independent of $\varepsilon$, such that
    \begin{itemize}
        \item[\textit{(5)}] $\mu(F_\delta(p\varepsilon))\leq q\mu(F_\delta(\varepsilon))$;
        \item[\textit{(6)}] $\mu(F_\delta(\frac{1}{2}p^k\rho_{\delta,E}(B^0_\delta)))\leq q^k\mu(B_\delta^0)$, for every nonnnegative integer $k$;

        \item[\textit{(7)}] $\mu(\Bar{E})=0$.
    \end{itemize}
\end{lemma}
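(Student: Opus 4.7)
The plan is to prove (5) first (the substance), from which (6) follows by iteration and (7) by letting $k\to\infty$ in (6). The three ingredients are the Whitney-type decomposition supplied by Lemma~\ref{lemma5.1}, the weak porosity of $E$ applied to selected Whitney balls, and the doubling of $\mu$ with respect to $\delta$. The main obstacle will be controlling the ``small'' Whitney balls, those with $t_i<\eta\varepsilon$ (i.e.\ $i\notin I_\eta$), on which weak porosity cannot be invoked via Lemma~\ref{lemma5.1}(4).

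\textbf{Setup for (5).} Apply Lemma~\ref{lemma5.1} at scale $\varepsilon$ to obtain the disjoint balls $B_i=B_\delta(z_i,t_i)\subset F_\delta(\varepsilon)$ together with the cover balls $\tilde B_i=B_\delta(z_i,\tfrac{5K_\delta^2}{\beta}t_i)$. Take $\eta\in(0,\beta/(12K_\delta^3)]$ depending only on $K_\delta,\beta,\gamma_0,C_{\delta,E}$, set $p:=\gamma_0\Theta(\eta)/(2K_\delta)$, and arrange that for every $i\notin I_\eta$ the ball $B_i$ lies entirely in $\{\delta(\cdot,E)\geq p\varepsilon\}$. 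This is immediate from Lemma~\ref{lemma5.1}(3): the point $y_i\in B_\delta^0\setminus F_\delta(\varepsilon)$ satisfies $\delta(z_i,y_i)\leq\tfrac{13K_\delta^3}{\beta}t_i<\tfrac{13K_\delta^3}{\beta}\eta\varepsilon$, so every $x\in B_i$ is $\delta$-close to $y_i$ by a fixed multiple of $\eta\varepsilon$; since $\delta(y_i,E)\geq\varepsilon$, the quasi-triangle inequality gives $\delta(x,E)\geq\varepsilon/K_\delta-K_\delta\bigl(1+\tfrac{13K_\delta^3}{\beta}\bigr)\eta\varepsilon$, which exceeds $p\varepsilon$ for $\eta$ small enough (noting that $p$ either stays bounded or tends to $0$ as $\eta\to 0^+$ according as $C_{\delta,E}=1$ or $C_{\delta,E}>1$).

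\textbf{Proof of (5).} The idea is to build, inside each Whitney ball, a piece of $F_\delta(\varepsilon)\setminus F_\delta(p\varepsilon)$ with measure proportional to $\mu(B_i)$. For $i\notin I_\eta$, by the choice of $\eta$ the entire ball $B_i$ is such a piece. For $i\in I_\eta$, Lemma~\ref{lemma5.1}(4) yields $\rho_{\delta,E}(B_i)\geq\Theta(\eta)\varepsilon$, so the weak porosity of $E$ applied to $B_i$ produces disjoint pores $B_\delta(w_{i,j},s_{i,j})\subset B_i\setminus E$ with $s_{i,j}\geq\gamma_0\Theta(\eta)\varepsilon$ and $\sum_j\mu(B_\delta(w_{i,j},s_{i,j}))\geq\sigma_0\mu(B_i)$; the $(2K_\delta)^{-1}$-shrunk versions $B_\delta(w_{i,j},s_{i,j}/(2K_\delta))$ have $\delta(\cdot,E)\geq s_{i,j}/(2K_\delta)\geq p\varepsilon$ by the quasi-triangle inequality, and by doubling their union $G_i$ satisfies $\mu(G_i)\geq c\,\mu(B_i)$ for some $c$ depending only on $\sigma_0,K_\delta,A_\delta$. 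The sets $\{G_i\}_{i\in I_\eta}\cup\{B_i\}_{i\notin I_\eta}$ are pairwise disjoint and contained in $F_\delta(\varepsilon)\setminus F_\delta(p\varepsilon)$; using also $\mu(F_\delta(\varepsilon))\leq\sum_i\mu(\tilde B_i)\leq A_\delta^{k_0}\sum_i\mu(B_i)$ (from Lemma~\ref{lemma5.1}(2) and doubling, $k_0=k_0(K_\delta,\beta)$),
\[\mu(F_\delta(\varepsilon))-\mu(F_\delta(p\varepsilon))\;\geq\;\min(c,1)\sum_i\mu(B_i)\;\geq\;\min(c,1)A_\delta^{-k_0}\,\mu(F_\delta(\varepsilon)),\]
which is (5) with $q:=1-\min(c,1)A_\delta^{-k_0}\in(0,1)$.

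\textbf{Items (6) and (7).} For (6), take $\varepsilon_0:=\rho_{\delta,E}(B_\delta^0)/2$ and note $p^k\varepsilon_0<\rho_{\delta,E}(B_\delta^0)$ for every $k\geq 0$, so iterating (5) from the trivial bound $\mu(F_\delta(\varepsilon_0))\leq\mu(B_\delta^0)$ yields $\mu(F_\delta(p^k\varepsilon_0))\leq q^k\mu(B_\delta^0)$. For (7), $\bar E\cap B_\delta^0\subset F_\delta(\tau)$ for every $\tau>0$; letting $\tau=p^k\varepsilon_0\to 0$ in (6) gives $\mu(\bar E\cap B_\delta^0)=0$ whenever $B_\delta^0\cap E\neq\emptyset$. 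Since any $d$-ball disjoint from $E$ must be disjoint from $\bar E$ as well (because $d$-balls are open), writing $X$ as a countable union of balls (by $\sigma$-finiteness of $\mu$) concludes $\mu(\bar E)=0$.
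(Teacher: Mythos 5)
Your proof is correct and follows essentially the same structure as the paper's: choose $\eta$ small enough (the paper fixes it via the supremum condition in~\eqref{eq24}), set $p=\gamma_0\Theta(\eta)/(2K_\delta)$, show that the entire small Whitney balls $B_i$ ($i\notin I_\eta$) avoid $E_\delta(p\varepsilon)$, invoke weak porosity on the large ones to find a fixed fraction of measure avoiding $E_\delta(p\varepsilon)$, and iterate. The only cosmetic differences are that you pass directly to the $(2K_\delta)^{-1}$-shrunk pores where the paper first shows $B_\delta(z^i_j,K_\delta^{-1}s^i_j-\varepsilon')$ misses $E_\delta(\varepsilon')$ and then shrinks, and in (7) you invoke ``$\sigma$-finiteness'' where the cleaner justification is separability of a doubling space; both are immaterial.
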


We shall give the proofs of Lemmas~\ref{lemma5.1} and \ref{lemma5.2} after proving the main result contained in the next statement.

\begin{theorem}
    \label{teo5.3}
    Let $(X,d,\mu)$ be a space of homogeneous type such that the $d$-balls are open sets.
    Let $E$ be a $(\sigma,\gamma)$-weakly porous set in $X$ with respect to $d$ such that $\rho_{d,E}$ is doubling. Then, there exists $\alpha>0$ such that $w(x)=d(\cdot,E)^{-\alpha}$ belongs to $A_1(X,d,\mu)$. The constants $\alpha$ and $[w]_{A_1(X,d,\mu)}$ depend on $\sigma$, $\gamma$, $K_d$, $A_d$, and $C_{d,E}$.
\end{theorem}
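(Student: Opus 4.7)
The strategy is to pass to the Mac\'ias--Segovia quasi-distance $\delta$, apply Lemmas~\ref{lemma5.1} and \ref{lemma5.2} to bootstrap into the $A_1$ inequality via a layer cake argument, and transfer back to $d$. Since weak porosity, the doubling of the maximal hole function, and the $A_1$ condition are all invariant under equivalences of quasi-distances (Proposition~\ref{propo2.4}, Corollary~\ref{coro3.5}, Lemma~\ref{lemmaa0}), it is enough to produce $\alpha>0$ with $\delta(\cdot,E)^{-\alpha}\in A_1(X,\delta,\mu)$. Note that the $(\sigma_0,\gamma_0)$-weak porosity of $E$ with respect to $\delta$ already forces $\rho_{\delta,E}(B)>0$ for every $\delta$-ball $B$ (via condition \textit{(iv)} of Definition~\ref{def3}), so Lemma~\ref{lemma5.1} and Lemma~\ref{lemma5.2} are genuinely applicable.

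Fix $B_\delta^0=B_\delta(x_0,r_0)$ and a dilation factor $\Lambda>K_\delta$ depending only on $K_\delta$. In the case $B_\delta(x_0,\Lambda r_0)\cap E=\emptyset$, every $e\in E$ satisfies $\delta(x_0,e)\geq\Lambda r_0$, so the quasi-triangle inequality yields the uniform lower bound $\delta(x,E)\geq c_\Lambda r_0$ for every $x\in B_\delta^0$, with $c_\Lambda=(\Lambda-K_\delta)/K_\delta>0$. A matching upper bound of the form $\delta(x,E)\leq 2K_\delta^2 r_0+K_\delta\,\delta(y,E)$ for $x,y\in B_\delta^0$, combined with the previous lower bound, shows that $\delta(\cdot,E)$ oscillates by at most a bounded multiplicative factor on $B_\delta^0$. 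Hence $\text{ess sup}_{B_\delta^0}\delta(\cdot,E)^{-\alpha}\leq C\,\text{ess inf}_{B_\delta^0}\delta(\cdot,E)^{-\alpha}$ and the $A_1$ inequality is trivial.

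In the opposite case $B_\delta(x_0,\Lambda r_0)\cap E\neq\emptyset$, set $\widetilde B=B_\delta(x_0,\Lambda r_0)$ and $\widetilde\rho=\rho_{\delta,E}(\widetilde B)$. Doubling of $\mu$ reduces the problem to bounding $\fint_{\widetilde B}\delta(\cdot,E)^{-\alpha}d\mu$ by a constant multiple of $\widetilde\rho^{-\alpha}$. I use the layer cake identity
\[
\int_{\widetilde B}\delta(x,E)^{-\alpha}d\mu(x)=\alpha\int_0^\infty u^{-\alpha-1}\mu(F_\delta(u))\,du,
\]
with $F_\delta(u)=\{x\in\widetilde B:\delta(x,E)<u\}$, and split the outer integral at $u=\widetilde\rho/2$ and $u=2K_\delta\Lambda r_0$. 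The tail $u>2K_\delta\Lambda r_0$ integrates explicitly against $\mu(\widetilde B)$ (since $F_\delta(u)=\widetilde B$ for $u$ beyond the diameter of $\widetilde B$), and the middle range $u\in(\widetilde\rho/2,2K_\delta\Lambda r_0]$ is handled using the trivial estimate $\mu(F_\delta(u))\leq\mu(\widetilde B)$; both pieces contribute $\leq C\mu(\widetilde B)\widetilde\rho^{-\alpha}$. On $(0,\widetilde\rho/2]$, I decompose into the dyadic-type intervals $u\in(\tfrac{1}{2}p^{k+1}\widetilde\rho,\tfrac{1}{2}p^k\widetilde\rho]$ and apply item~\textit{(6)} of Lemma~\ref{lemma5.2} to get
\[
\int_0^{\widetilde\rho/2}u^{-\alpha-1}\mu(F_\delta(u))\,du\leq C(p,\alpha)\,\mu(\widetilde B)\,\widetilde\rho^{-\alpha}\sum_{k\geq 0}(qp^{-\alpha})^k,
\]
whose convergence forces the choice $\alpha<\log(1/q)/\log(1/p)$. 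Collecting the three pieces yields $\fint_{\widetilde B}\delta(\cdot,E)^{-\alpha}d\mu\leq C\widetilde\rho^{-\alpha}$.

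To close, I apply Lemma~\ref{lemma3.3} to $\widetilde B$ together with $\mu(\bar E)=0$ from item~\textit{(7)} of Lemma~\ref{lemma5.2}: these yield $\text{ess sup}_{B_\delta^0}\delta(\cdot,E)\leq\text{ess sup}_{\widetilde B}\delta(\cdot,E)\leq C_0\widetilde\rho$, so $\widetilde\rho^{-\alpha}\leq C_0^\alpha\,\text{ess inf}_{B_\delta^0}\delta(\cdot,E)^{-\alpha}$, finishing the proof. The crux of the argument is the layer cake computation: all the earlier machinery (the Mac\'ias--Segovia quasi-distance, Whitney-type decompositions inside $\delta$-balls, weak porosity, and the doubling of $\rho_{\delta,E}$) is mobilised precisely to produce the exponential decay of $\mu(F_\delta(\cdot))$ that makes the geometric series above summable for some $\alpha>0$; the remainder is bookkeeping through the change of quasi-distance and the two-case split.
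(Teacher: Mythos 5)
Your proposal is correct and follows essentially the same route as the paper: reduce to the Mac\'ias--Segovia quasi-distance via the invariance results, use the exponential decay $\mu(F_\delta(p^k\rho/2))\leq q^k\mu(B_\delta^0)$ from Lemma~\ref{lemma5.2} to sum a geometric series for $\alpha$ small enough that $qp^{-\alpha}<1$, and close with Lemma~\ref{lemma3.3} and $\mu(\bar E)=0$. The only differences are cosmetic: you organize the case analysis around a single dilate $\widetilde B$ intersecting $E$ (merging the paper's cases (B) and (C)) and phrase the annular decomposition as a layer cake integral, but these are equivalent to the paper's computation.
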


\begin{proof}
    For the given quasi-distance $d$ on $X$, applying Theorem~\ref{teo2}, we obtain that the new quasi-distance $\delta$ given by Definition~\ref{def4} belongs to $\partial(X,d)$. Hence, since $(X,d,\mu)$ is $(\sigma,\gamma)$-weakly porous and the function $\rho_{d,E}$ satisfies the doubling condition with constant $C_{d,E}$, from Corollary~\ref{coro3.5} and Lemma~\ref{lemmaa0}, we have that $(X,\delta,\mu)$ is $(\sigma_0,\gamma_0)$-weakly porous and $\rho_{\delta,E}$ satisfies the doubling condition with constant $C_{\delta,E}$. The parameters $\sigma_0$, $\gamma_0$, and $C_{\delta,E}$ depend on $K_d$, $A_d$, $\sigma$, $\gamma$, and $C_{d,E}$. On the other hand, from Proposition~\ref{propo2.4} it is enough to prove that for some $\alpha>0$, $\delta(\cdot,E)^{-\alpha}\in A_1(X,\delta,\mu)$ so that we are looking for positive constants $C$ and $\alpha$ such that for every ball $B_\delta^0=B_\delta(x_0,r_0)$, the inequality
    \begin{equation}
        \label{eq5.3.1}
        \fint_{B_\delta(x_0,r_0)} \delta(x,E)^{-\alpha}d\mu(x)\leq C\:\text{ess inf}_{B_\delta(x_0,r_0)}\delta(\cdot,E)^{-\alpha}
    \end{equation}
    holds. In order to prove this inequality we consider three cases regarding the relation of $B_\delta^0$ and $E$. More precisely, consider the three cases.
    \begin{itemize}
    \item[(A)] $B_\delta(x_0,r_0)\cap E=\emptyset\land \delta(B_\delta(x_0,r_0),E)\geq4K_\delta^2r_0$;
    \item[(B)] $B_\delta(x_0,r_0)\cap E=\emptyset\land \delta(B_\delta(x_0,r_0),E)<4K_\delta^2r_0$;
    \item[(C)] $B_\delta(x_0,r_0)\cap E\neq\emptyset$,
    \end{itemize}
    where $\delta(A,B):=\inf_{a\in A,b\in B}\delta(a,b)$ denotes the distance with respect to $\delta$ of the sets $A$ and $B$. Let us start with the case (A). Take $x,y\in B_\delta^0$ and $\alpha>0$ to be chosen later. Then, $\delta(x,E)\geq \frac{1}{K_\delta}\delta(y,E)-\delta(x,y)\geq \frac{1}{K_\delta}\delta(y,E)-2K_\delta r_0\geq \frac{1}{K_\delta}\delta(y,E)-\frac{1}{2K_\delta}\delta(B_\delta^0,E)\geq\frac{1}{2K_\delta}\delta(y,E)$. So that $\delta(x,E)^{-\alpha}\leq(2K_\delta)^\alpha\:\text{inf ess}_{y\in B_\delta^0}\delta(y,E)^{-\alpha}$ for every $\alpha>0$ and every $x\in X$, thus
    \begin{equation}
    \label{eq17}
    \fint_{B_\delta^0}\delta(x,E)^{-\alpha}d\mu(x)\leq(2K_\delta)^\alpha\:\text{inf ess}_{y\in B_\delta^0}\delta(y,E)^{-\alpha}.
    \end{equation}

    \noindent Now, (B) can be obtained from (C) noting that, with $m$ such that $2^{m-1}\leq5K_\delta^3\leq2^m$, we have
    \begin{align}
    \fint_{B_\delta^0}\delta(x,E)^{-\alpha}d\mu(x) &\leq \frac{\mu(5K_\delta^3B_\delta^0)}{\mu(B_\delta^0)}\fint_{5K_\delta^3B_\delta^0}\delta(x,E)^{-\alpha}d\mu(x) \nonumber \\
    &\leq A_\delta^m\fint_{5K_\delta^3B_\delta^0}\delta(x,E)^{-\alpha}d\mu(x). \nonumber 
    \end{align}
    Observe now that $5K_\delta^3B_\delta^0\cap E\neq\emptyset$. Then, assuming that \eqref{eq5.3.1} holds in case (C) for some appropriate values of $\alpha>0$, we have
    \begin{equation}
        \label{eqq2}
        \fint_{B_\delta^0}d(x,E)^{-\alpha}d\mu(x)\leq CA^m_\delta\:\text{ess inf}_{5K_\delta^3B_\delta^0}\delta(\cdot,E)^{-\alpha}\leq CA^m_\delta\:\text{ess inf}_{B_\delta^0}\delta(\cdot,E)^{-\alpha},
    \end{equation}
    as desired. Let us finally consider the case (C). Assume that $B_\delta^0\cap E\neq\emptyset$ and write $\rho=\rho_{\delta,E}(B_\delta^0)$. Let us split the integral in question using the notation $E_\delta(\cdot)$ and $F_\delta(\cdot)$ introduced in Lemma~\ref{lemma5.2}.
    \begin{align}
    \int_{B_\delta^0}\delta(x,E)^{-\alpha}d\mu(x)&= \int_{B_\delta^0\setminus E_\delta(\frac{1}{2}\rho)}\delta(x,E)^{-\alpha}d\mu(x) + \int_{F_\delta(\frac{1}{2}\rho)}\delta(x,E)^{-\alpha}d\mu(x) \nonumber \\
    &\leq 
    \label{eqq0}
    \Big(\frac{\rho}{2}\Big)^{-\alpha}\mu\Big(B_\delta^0\setminus E_\delta\Big(\frac{1}{2}\rho\Big)\Big) + \int_{F_\delta(\frac{1}{2}\rho)}\delta(x,E)^{-\alpha}d\mu(x).
    \end{align}
    To find a proper bound for the second term on the right-hand side notice that the set where the integral is taken can be expressed as
    \begin{align*}
     F_\delta(2^{-1}\rho)&=\{x\in B_\delta^0:\delta(x,E)<2^{-1}\rho\} \\   
     &=\{x\in B_\delta^0:\delta(x,E)=0\}\cup \bigcup_{k=0}^\infty\{x\in B_\delta^0:2^{-1}p^{k+1}\rho\leq \delta(x,E)<2^{-1}p^k\rho\} \\
     &=(\Bar{E}\cap B_\delta^0)\cup\bigcup_{k=0}^\infty F_\delta(2^{-1}p^k\rho)\setminus F_\delta(2^{-1}p^{k+1}\rho),
    \end{align*}
    where $0<p<1$ is provided by Lemma~\ref{lemma5.2}. Furthermore, from \textit{(7)} in Lemma~\ref{lemma5.2}, $\mu(\Bar{E})=0$. Hence, from \textit{(6)} in the same lemma,
    \begin{align*}
    \int_{F_\delta(\frac{1}{2}\rho)}\delta(x,E)^{-\alpha}d\mu(x)
    &=\sum_{k=0}^\infty \int_{F_\delta(p^k\frac{\rho}{2})\setminus F_\delta(p^{k+1}\frac{\rho}{2})}\delta(x,E)^{-\alpha}d\mu(x) \\
    &\leq \sum_{k=0}^\infty \Big(p^{k+1}\frac{\rho}{2}\Big)^{-\alpha} \mu\Big( F_\delta\Big(p^k\frac{\rho}{2}\Big)\setminus F_\delta\Big(p^{k+1}\frac{\rho}{2}\Big) \Big) \\
    &\leq \Big(p\frac{\rho}{2}\Big)^{-\alpha} \sum_{k=0}^\infty (p^{-\alpha})^k \mu\Big( F_\delta\Big(p^k\frac{\rho}{2}\Big)\Big) \\
    &\leq \Big(p\frac{\rho}{2}\Big)^{-\alpha} \mu(B_\delta^0) \sum_{k=0}^\infty \Big(\frac{q}{p^\alpha}\Big)^k \\
    &\leq C(\alpha)\mu(B_\delta^0)\rho^{-\alpha},
    \end{align*}
    if we take $\alpha>0$ sufficiently small as to make $p^{-\alpha}q<1$. Now, since $B_\delta^0\cap E\neq\emptyset$ and $\mu(\Bar{E})=0$, Lemma~\ref{lemma3.3} implies
    \begin{equation}
        \label{eqq1}
        \int_{F_\delta(\frac{1}{2}\rho)}\delta(x,E)^{-\alpha}d\mu(x)\leq \Tilde{C}(\alpha)\mu(B_\delta^0)\text{inf ess}_{B^0_\delta}\delta(\cdot,E)^{-\alpha}.
    \end{equation}
    Combining then \eqref{eq17}, \eqref{eqq2}, \eqref{eqq0} and \eqref{eqq1} and taking the maximum over all constants found, it follows that $d(\cdot,E)^{-\alpha}$ belongs to the class $A_1(X,\delta,\mu)$ for every $\alpha>0$ such that $p^{-\alpha}q<1$.
\end{proof}

In the following, we give the proofs of Lemmas~\ref{lemma5.1} and \ref{lemma5.2} which made it possible to obtain the main result of this section.

\begin{proof}[Proof of Lemma~\ref{lemma5.1}]
    Consider the set $\Omega_\varepsilon:=F_\delta(\varepsilon)$ seen as a subset of the space $(B_\delta^0,\delta^{*},\mu^{*})$, where $\delta^*$ and $\mu^*$ are the restrictions to $B^0_\delta$ of $\delta$ and $\mu$, respectively. This set is open, bounded, and proper in $B^0_\delta$. In fact, since $0<\varepsilon<\rho_{\delta,E}(B^0_\delta)$ we may take $\zeta$ such that $\varepsilon<\zeta<\rho_{\delta,E}(B^0_\delta)$ and a $\delta$-ball $B_\delta(x,\zeta)$ which does not intersect with $E$ and is contained in $B^0_\delta$. Since, $\varepsilon<\zeta$, it is clear that the center $x$ of this ball does not belong to $E_\delta(\varepsilon)$, hence $x\in B^0_\delta\setminus\Omega_\varepsilon=B^0_\delta\setminus F_\delta(\varepsilon)$. Apply now Lemma~\ref{lemma0}, with triangular constant $K=K_\delta$, to $\Omega_\varepsilon$ to obtain a family of $\delta^*$-balls $\{B_{\delta^*}(x_i,r_i)\}_{i\in I}$ satisfying (a)-(d) of Lemma~\ref{lemma0}. Notice that $B_{\delta^*}(x,t)=B_\delta(x,t)\cap B_\delta^0$, where $B_\delta(x,t)$ is the $\delta$-ball in $X$ centered at $x\in B^0_\delta$ with radius $t$. We then have
    \begin{equation}
        \label{eq27}
        \bigcup_{i\in I}B_{\delta^*}(x_i,r_i)\subset F_\delta(\varepsilon) = \bigcup_{i\in I} B_{\delta^*}(x_i,4K_\delta r_i). 
    \end{equation}
    Fixed some index $i\in I$, as $B_\delta^0\setminus\Omega_\varepsilon\neq\emptyset$ and $B_{\delta^*}(x_i,r_i)\subset\Omega_\varepsilon$, we have $r_i\leq 2K_\delta r_0$, so by \textit{(iii)} in Theorem~\ref{teo2} we can find some $z_i\in X$ such that $B_\delta(z_i,\beta r_i)\subset B_{\delta^*}(x_i,r_i)$. As $z_i\in B_{\delta^*}(x_i,r_i)$, we also have $B_{\delta^*}(x_i,4K_\delta r_i)\subset B_\delta(z_i,5K_\delta^2r_i)$.
    Then,
    \begin{equation}
        \label{eq22}
        \bigcup_{i\in I}B_\delta(z_i,t_i)\subset F_\delta(\varepsilon)\subset\bigcup_{i\in I}B_\delta\Big(z_i,\frac{5K_\delta^2}{\beta}t_i)\Big),
    \end{equation}
    where $t_i:=\beta r_i$ and the $B_\delta(z_i,t_i)$ are pairwise disjoint because, from \textit{(a)} in Lemma~\ref{lemma0}, the balls $B_{\delta^*}(x_i,r_i)$ are pairwise disjoint. The above inclusion proves \textit{(1)} and \textit{(2)} in Lemma~\ref{lemma5.1}. In order to prove \textit{(3)}, notice that item \textit{(d)} of Lemma~\ref{lemma0} implies that for each $i\in I$ there exists some $y_i\in B^0_\delta\setminus F_\delta(\varepsilon)$ such that $\delta^*(x_i,y_i)\leq12K_\delta^2r_i$. From this we see $\delta(z_i,y_i)\leq K_\delta[\delta(z_i,x_i)+\delta(x_i,y_i)]\leq K_\delta[r_i+12K_\delta^2r_i]\leq 13K_\delta^3\beta^{-1}t_i$ for every $i\in I$.
    Let us finally prove \textit{(4)}. Given some $e\in E\cap B_\delta^0$, because of \eqref{eq27} there must exist $i\in I$ such that $e\in B_{\delta^*}(x_i,4K_\delta r_i)$, therefore (using \textit{(c)} in Lemma~\ref{lemma0})
    \begin{align*}
        \varepsilon\leq \delta^*(e,B^0_\delta\setminus\Omega_\varepsilon)\leq12K^3_\delta r_i=\frac{12K_\delta^3}{\beta}t_i,
    \end{align*}
    so that $t_i\geq\frac{\beta}{12K_\delta^3}\varepsilon$ when $B_{\delta^*}(x_i,4K_\delta r_i)\cap E\neq\emptyset$. In particular, if $0<\eta\leq(12K_\delta^3)^{-1}\beta$ and we define the index set $I_\eta:=\{i\in I:t_i\geq\eta\varepsilon\}$, then $I_\eta\neq\emptyset$. Furthermore, if $i\in I_\eta$, as $B_\delta(y_i,\varepsilon)\cap E=\emptyset$, $B_\delta(y_i,\varepsilon)\subset  B_\delta(z_i,[K_\delta\eta^{-1}+13K_\delta^4\beta^{-1}]t_i)$ and $\varepsilon\leq\eta^{-1}t_i\leq2K_\delta[K_\delta\eta^{-1}+13K_\delta^4\beta^{-1}]t_i$, we have $\varepsilon\in\Lambda(z_i,[K_\delta\eta^{-1}+13K_\delta^4\beta^{-1}]t_i;\delta,E)$ and
    \begin{align*}
        \varepsilon 
        &\leq \rho_{\delta,E}\Big(B_\delta(z_i,[K_\delta\eta^{-1}+13K_\delta^4\beta^{-1}]t_i)\Big) \\
        &\leq C_{\delta,E}^m\rho_{\delta,E}(B_\delta(z_i,t_i)),
    \end{align*}
    \noindent where $m$ is the integer satisfying $2^{m-1}<\frac{K_\delta}{\eta} + \frac{13K_\delta^4}{\beta}\leq2^m$. These inequalities prove \eqref{eq15}.
\end{proof}

\begin{proof}[Proof of Lemma~\ref{lemma5.2}]
    Let $0<\varepsilon<\rho_{\delta,E}(B^0_\delta)$. For $\eta>0$, consider the function $\Theta(\eta)=\Big[\tfrac{2K_\delta}{\eta}+\tfrac{26K_\delta^4}{\beta}\Big]^{-\log_2C_{\delta,E}}$ introduced in Lemma~\ref{lemma5.1}. Since the constant $C_{\delta,E}$ is greater than one, because $E$ is weakly porous, the exponent $-\log_2C_{\delta,E}$ is negative and $\Theta$ is an increasing function of $\eta$. Also, $\lim_{\eta\to0}\Theta(\eta)=0$ and $\lim_{\eta\to\infty}\Theta(\eta)=(\frac{\beta}{26K_\delta^4})^{\log_2C_{\delta,E}}<1$. Set
    \begin{equation}
        \label{eq24}
        \eta_0:=\sup\Bigg\{\eta\in\Big(0,\frac{\beta}{12K_\delta^3}\Big]:\eta\leq\frac{\frac{1}{K_\delta}-\frac{\gamma_0}{2K_\delta}\Theta(\eta)}{K_\delta+13K_\delta^4\beta^{-1}}\Bigg\},
    \end{equation}
    so that $\eta_0$ is well defined as the largest positive number satisfying the inequality appearing in \eqref{eq24}.
    Since $0<\eta_0\leq\frac{\beta}{12K_\delta^3}$, the balls $B_\delta(z_i,t_i)$ of Lemma~\ref{lemma5.1} with $i\in I_{\eta_0}$ satisfy
    \begin{equation}
        \label{eq25}
        \rho_{\delta,E}(B_\delta(z_i,t_i)) \geq \Theta(\eta_0)\varepsilon.
    \end{equation}
    \noindent Having this into account, take $\varepsilon':=\frac{\gamma_0}{2K_\delta}\Theta(\eta_0)\varepsilon$. Notice $\varepsilon'<\varepsilon$ as $\Theta(\eta)<1$ for every $\eta>0$. We now proceed to estimate the measure of the set $F_\delta(\varepsilon)\setminus F_\delta(\varepsilon')$ by using \textit{(1)} in Lemma~\ref{lemma5.1},
    \begin{align*}
        \mu(F_\delta(\varepsilon)\setminus F_\delta(\varepsilon'))\geq \mu\Big( 
        \bigcup_{i\in I}B_\delta(z_i,t_i)\setminus F_\delta(\varepsilon')\Big)
        =\sum_{i\in I} \mu(B_\delta(z_i,t_i)\setminus E_\delta(\varepsilon')).
    \end{align*}
    Let us analyze individually the measures of sets $B_\delta(z_i,t_i)\setminus E_\delta(\varepsilon')$ separating in cases according to whether $i\in I_{\eta_0}$ or $i\in I\setminus I_{\eta_0}$. In the case of the smaller balls, i.e. when $i\in I\setminus I_{\eta_0}$, we invoke \textit{(3)} in Lemma~\ref{lemma5.1} to obtain a point $y_i\in B_\delta^0\setminus F_\delta(\varepsilon)$ such that $\delta(z_i,y_i)\leq 13K_\delta^3\beta^{-1}t_i$. Let us observe that $K_\delta^{-1}\varepsilon-\varepsilon'=K_\delta^{-1}\varepsilon(1-\frac{\gamma_0}{2}\Theta(\eta_0))>0$, so we claim that $B_\delta(y_i,K_\delta^{-1}\varepsilon-\varepsilon')\cap E_\delta(\varepsilon')=\emptyset$. To see this, take $w\in B_\delta(y_i,K_\delta^{-1}\varepsilon-\varepsilon')$ and $e\in E$. It follows that $\delta(w,e)\geq K_\delta^{-1}\delta(y_i,e)-\delta(y_i,w)\geq K_\delta^{-1}\varepsilon-K_\delta^{-1}\varepsilon+\varepsilon'=\varepsilon'$, since $B_\delta(y_i,\varepsilon)\cap E=\emptyset$. Hence, $w\notin E_\delta(\varepsilon')$. We now claim that $B_\delta(z_i,t_i)\subset B_\delta(y_i,K_d^{-1}\varepsilon-\varepsilon')$, so in particular $B_\delta(z_i,t_i)\setminus E_\delta(\varepsilon')=B_\delta(z_i,t_i)$. To prove this take $w\in B_\delta(z_i,t_i)$ and, recalling $t_i<\eta_0\varepsilon$, we have
    \begin{align*}
        \delta(w,y_i) &\leq K_\delta[\delta(w,z_i)+\delta(z_i,y_i)] \\
        &< t_i [K_\delta + 13K_\delta^4\beta^{-1}] \\
        &< \eta_0\varepsilon [K_\delta + 13K_\delta^4\beta^{-1}] \leq \varepsilon\Big( \frac{1}{K_\delta}-\frac{\gamma_0}{2K_\delta}\Theta(\eta_0) \Big) = \frac{\varepsilon}{K_\delta}-\varepsilon',
    \end{align*}
    as claimed. This means $\mu(B_\delta(z_i,t_i)\setminus E_\delta(\varepsilon'))=\mu(B_\delta(z_i,t_i))$ for every $i\in I\setminus I_{\eta_0}$. On the other hand, if $i\in I_{\eta_0}$, being $E$ a weakly porous set in $X$ with respect to $\delta$, we can find a finite sequence of pairwise disjoint balls $\{B_\delta(z^i_j,s^i_j)\}_{j=1}^N$ such that
    \begin{itemize}
        
         \item[\textit{(i)}] $B_\delta(z^i_j,s^i_j)\subset B_\delta(z_i,t_i)\setminus E$ for all $1\leq j\leq N$;
        \item[\textit{(ii)}] $s^i_j\geq\gamma_0 \rho_{\delta,E}(B_\delta(z_i,t_i))$, for every $j=1,\hdots,N$;
        \item[\textit{(iii)}] $s^i_j\leq2K_\delta t_i$ for every $j=1,\hdots,N$;
        \item[\textit{(iv)}] $\sum_{j=1}^N\mu(B_\delta(z^i_j,s^i_j))\geq \sigma_0\mu(B_\delta(z_i,t_i))$.
    \end{itemize}
    From \textit{(ii)} above and \eqref{eq25}, we have $s^i_j\geq\gamma_0\Theta(\eta_0)\varepsilon$, which in turn implies $\frac{1}{K_\delta}s^i_j-\varepsilon'=\frac{1}{K_\delta}(s^i_j-\frac{\gamma_0}{2}\Theta(\eta_0)\varepsilon)\geq\frac{1}{2K_\delta}s^i_j$. Also, we have that $B_\delta(z^i_j,K_\delta^{-1}s^i_j-\varepsilon')\cap E_\delta(\varepsilon')=\emptyset$. To see this, we proceed as in some previous step and take $w\in B_\delta(z^i_j,K_\delta^{-1}s^i_j-\varepsilon')$ and $e\in E$. It follows that $\delta(w,e)\geq K_\delta^{-1}\delta(z^i_j,e)-\delta(z^i_j,w)\geq K_\delta^{-1}s^i_j-K_\delta^{-1}s^i_j+\varepsilon'=\varepsilon'$, thus $w\notin E_\delta(\varepsilon')$. It turns out that for $i\in I_{\eta_0}$,
    \begin{align*}
        \mu(B_\delta(z_i,t_i)\setminus E_\delta(\varepsilon')) &\geq \mu\Big( \bigcup_{j=1}^N B_\delta(z^i_j,s^i_j) \setminus E_\delta(\varepsilon') \Big) \\
        &= \sum_{j=1}^N \mu( B_\delta(z^i_j,s^i_j) \setminus E_\delta(\varepsilon')) \\
        &\geq \sum_{j=1}^N \mu\Big( B_\delta\Big(z^i_j,\frac{s^i_j}{K_\delta}-\varepsilon'\Big)\Big) \\
        &\geq \sum_{j=1}^N \mu\Big( B_\delta\Big(z^i_j,\frac{1}{2K_\delta} s^i_j\Big) \Big) \geq \frac{1}{A_\delta^k} \sum_{j=1}^N \mu\Big( B_\delta(z^i_j,s^i_j) \Big) \geq \frac{\sigma_0}{A_\delta^k} \mu(B_\delta(z_i,t_i))
    \end{align*}
    where $2^{k-1}<2K_\delta\leq2^k$.
    Putting all together, we can continue our evaluation of $\mu(F_\delta(\varepsilon)\setminus F_\delta(\varepsilon'))$ as follows,
    \begin{align*}
        \mu(F_\delta(\varepsilon)\setminus F_\delta(\varepsilon')) &\geq \sum_{i\in I\setminus I_{\eta_0}} \mu(B_\delta(z_i,t_i)\setminus E_\delta(\varepsilon')) + \sum_{i\in I_{\eta_0}} \mu(B_\delta(z_i,t_i)\setminus E_\delta(\varepsilon')) \\
        &\geq \sum_{i\in I\setminus I_{\eta_0}} \mu(B_\delta(z_i,t_i)) + \frac{\sigma_0}{A_\delta^k}\sum_{i\in I_{\eta_0}} \mu(B_\delta(z_i,t_i)) \\
        &\geq \frac{\sigma_0}{A_\delta^k}\sum_{i\in I}\mu(B_\delta(z_i,t_i)) \\
        &\geq \frac{\sigma_0}{A_\delta^{k+l}}\sum_{i\in I}\mu\Big(B_\delta\Big(z_i,\frac{5K_\delta^2}{\beta}t_i\Big)\Big)\\
        &\geq \frac{\sigma_0}{A_\delta^{k+l}} \mu(F_\delta(\varepsilon)),
    \end{align*}
    where $l$ was chosen to verify $2^{l-1}<\frac{5K_\delta^2}{\beta}\leq2^l$.
    If we take $p:=\frac{\gamma_0}{2K_\delta}\Theta(\eta_0)$ and $q:=1-\sigma_0 A_\delta^{-k-l}$, then $0<p,q<1$, $\varepsilon'=p\varepsilon$ and the inequality $\mu(F_\delta(\varepsilon)\setminus F_\delta(\varepsilon'))\geq\frac{\sigma_0}{A_\delta^{k+l}} \mu(F_\delta(\varepsilon))$ can be rewritten as
    $$\mu(F_\delta(\varepsilon))-\mu(F_\delta(p\varepsilon))\geq (1-q)\mu(F_\delta(\varepsilon)),$$
    which can be rearranged to get \textit{(5)}. Item \textit{(6)} easily follows by applying \textit{(5)} repeatedly, starting with $\varepsilon=\frac{p^k}{2}\rho_{\delta,E}(B^0_\delta)$,
    \begin{align*}
        \mu\Big(F_\delta\Big(\frac{p^k}{2}\rho_{\delta,E}(B^0_\delta)\Big)\Big)\leq q\mu\Big(F_\delta\Big(\frac{p^{k-1}}{2}\rho_{\delta,E}(B^0_\delta)\Big)\Big)\leq\ldots\leq q^k\mu\Big(F_\delta\Big(\frac{\rho_{\delta,E}(B^0_\delta)}{2}\Big)\Big)\leq q^k\mu(B^0_\delta).
    \end{align*}
    Let us finally prove \textit{(7)}. Notice first that $\Bar{E}\subset\bigcup_{e\in E}B_\delta(e,r)=E_\delta(r)$ for any choice of positive $r$, so it suffices to check that $\mu(\Bar{E}\cap B_\delta(e,r))=0$ for every $e\in E$ and $r>0$. Given such a ball $B_\delta:=B_\delta(e,r)$, since its center belongs to $E$, clearly $B_\delta\cap E\neq\emptyset$ and we can apply \textit{(6)} with $F_\delta(\varepsilon):=E_\delta(\varepsilon)\cap B_\delta$. Indeed, for every $k\in\mathbb{N}$,
    \begin{align*}
        \mu(\Bar{E}\cap B_\delta)\leq\mu\Big(E_\delta\Big(\frac{p^k}{2}\rho_{\delta,E}(B_\delta)\Big)\cap B_\delta\Big)=\mu\Big(F_\delta\Big(\frac{p^k}{2}\rho_{\delta,E}(B_\delta)\Big)\Big)\leq q^k\mu(B_\delta),
    \end{align*}
    and so taking the limit as $k\to\infty$ shows $\mu(\Bar{E}\cap B_\delta)=0$.
\end{proof}



\section*{Statements and Declarations}
\subsection*{Funding}
Consejo Nacional de Investigaciones Cient\'ificas y T\'ecnicas. 

\subsection*{Conflicts of interest/Competing interests}
The authors have no conflicts of interest to declare that are relevant to the content of this article.

\subsection*{Availability of data and material}
Not applicable.


\subsection*{Acknowledgements}
This work was supported by Consejo Nacional de Investigaciones Cient\'ificas y T\'ecnicas-CONICET and Universidad Nacional del Litoral-UNL, in Argentina.


\bigskip

\bigskip

%
\noindent{\textit{Affiliation.} 
	\textsc{Instituto de Matem\'{a}tica Aplicada del Litoral ``Dra. Eleonor Harboure'', CONICET, UNL.}

	\noindent \textit{Address.} \textmd{IMAL, Streets F.~Leloir and A.P.~Calder\'on, CCT CONICET Santa Fe, Predio ``Alberto Cassano'', Colectora Ruta Nac.~168 km~0, Paraje El Pozo, S3007ABA Santa Fe, Argentina.}

%
\noindent \textit{E-mail:} \verb|haimar@santafe-conicet.gov.ar| \\ \hspace*{1.3cm} \verb|ivanagomez@santafe-conicet.gov.ar| \\ \hspace*{1.3cm} \verb|ignaciogomez@santafe-conicet.gov.ar|
}
\end{document}